\documentclass[10pt,a4paper]{amsart}
\usepackage[latin1]{inputenc}
\usepackage{amsmath}
\usepackage{amsfonts}
\usepackage{amssymb}
\usepackage{bm}
\usepackage{url} 
\usepackage{proof}
\usepackage{graphicx}
\usepackage{subfig}
\captionsetup[subfigure]{labelformat=parens, labelfont=rm}
 
\author{Oliver J. D. Barrowclough}
\author{Tor Dokken}
\email{oliver.barrowclough@sintef.no, tor.dokken@sintef.no}
\address{SINTEF ICT, Applied Mathematics\\P.O. Box 124, Blindern \\ 0314 Oslo, Norway}

\title{Approximate Implicitization Using Linear Algebra}

\newtheorem{thm}{Theorem}

\newtheorem{cor}[thm]{Corollary}
\newtheorem{prop}[thm]{Proposition}
\newtheorem{algorithm}{Algorithm}

\begin{document}

\begin{abstract}
In this paper we consider a family of algorithms for approximate implicitization of rational parametric curves and surfaces. The main approximation tool in all of the approaches is the singular value decomposition, and they are therefore well suited to floating point implementation in computer aided geometric design (CAGD) systems. We unify the approaches under the names of commonly known polynomial basis functions, and consider various theoretical and practical aspects of the algorithms. We offer new methods for a least squares approach to approximate implicitization using orthogonal polynomials, which tend to be faster and more numerically stable than some existing algorithms. We propose several simple propositions relating the properties of the polynomial bases to their implicit approximation properties.
\end{abstract}

\keywords{implicitization, approximation, change of representation, orthogonal polynomials}

\maketitle

\section{Introduction}

Implicitization algorithms have been studied in both the CAGD and algebraic geometry communities for many years. Traditional approaches to implicitization have focused on exact methods such as Gr\"obner bases, resultants and moving curves and surfaces, or syzygies \cite{sederberg_1995}. Approximate methods that are particularly well suited to floating point implementation have also emerged in the past 25 years \cite{bajaj_1993,chuang_1989,corless_2001,dokken_1997,pratt_1987}. These methods are closely related to the algorithms we present, however, those that fit most closely into the framework of this paper include \cite{corless_2001,dokken_2001,dokken_2006,wang_2004}.

Implicitization is the conversion of parametrically defined curves and surfaces into curves and surfaces defined by the zero set of a single polynomial. Exact implicit representations of rational parametric manifolds often have very high polynomial degrees, which can cause numerical instabilities and slow floating point calculations. In cases where the geometry of the manifold is not sufficiently complicated to justify this high degree, approximation is often desirable. Moreover, for CAGD systems based on floating point arithmetic, exact implicitization methods are often unfeasible due to performance issues. The methods we present, attempt to find `best fit' implicit curves or surfaces of a given degree $m$ (the definition of `best fit' varies with regard to the chosen method of approximation). One important property of all the algorithms is that they are guaranteed to give exact implicitizations for sufficiently high implicit degrees, up to numerical stability. In addition, some of the methods are also suitable for implementation in exact arithmetic, hence constituting alternative methods for exact implicitization. 

For simplicity of notation, we proceed for the majority of the paper to describe the implicitization of curves. In Sections \ref{sec:prelim}, \ref{sec:orig} and \ref{sec:weak} we introduce the notation and review existing methods. In Section \ref{sec:orthog} we present a new method for approximate implicitization using orthogonal polynomials and prove a theoretical relation to the previous methods. Implementations of the methods using different basis functions will be presented in Section \ref{sec:bases} and a qualitative comparison and discussion given in Section \ref{sec:comparison}. Finally, the generalization to both tensor-product and triangular surfaces will be covered in Section \ref{sec:surfaces}.

\section{Preliminaries}\label{sec:prelim}

A parametric curve in $\mathbb{R}^2$ is given by $\mathbf{p}(t) = (p_1(t),p_2(t))$ where $p_1$ and $p_2$ are functions in $t$ on some parameter domain $\Omega.$ Of particular importance both in CAGD and classical algebraic geometry are rational parametric curves (i.e., where $p_1$ and $p_2$ are rational functions). In the majority of this paper we will thus restrict our attention to planar rational curves, where the domain of interest is $\Omega=[0,1].$ In order to use polynomial bases in our construction, we can use the representation of the curves in the projective plane $\mathbb{P}^2.$ For a rational parametric curve $\mathbf{p}(t) = (g_1(t)/h(t),g_2(t)/h(t))$ in $\mathbb{R}^2,$ where $g_1,g_2$ and $h$ are polynomials, we thus use the homogeneous description
\[
\mathbf{p}(t) = (g_1(t),g_2(t),h(t)).
\]

All the methods to be described require a choice of degree $m$ and a choice of basis $(q_k(\mathbf{u}))_{k=1}^M,$ for the implicit polynomial. Here $M$ is defined as the number of basis functions in a polynomial of total degree $m.$ Thus, for a general bivariate polynomial, we have $M = \binom{m+2}{2}.$ Any polynomial $q$ can be written in terms of such a basis by choosing coefficients $\mathbf{b} = (b_k)_{k=1}^M:$
\begin{equation}\label{eq:q}
q(\mathbf{u}) = \sum_{k=1}^M b_k q_k(\mathbf{u}).
\end{equation}
The choice of implicit basis is an important factor which has implications for both the stability of the algorithms and the quality of the approximations. However, most of the work in this paper is independent of the choice of implicit basis. In $\mathbb{R}^2,$ a good choice is the Bernstein basis in a barycentric coordinate system defined over a triangle containing the parametric curve. For curves in $\mathbb{P}^2,$ we use the homogeneous Bernstein basis given by
\[
q_\mathbf{k}(u,v,w) = \binom{m}{k_1,k_2,k_3} u^{k_1} v^{k_2} w^{k_3}, \quad \text{ for } |\mathbf{k}|=k_1+k_2+k_3 = m,
\]
where $u,v$ and $w$ denote the homogeneous coordinates and $\mathbf{k}=(k_1,k_2,k_3)$ denotes a multi-index. We order the basis by letting $q_k$ correspond to $q_{\mathbf{k}}$ for $k=1,\ldots,M,$ where $k=k(\mathbf{k})$ denotes lexicographical order on the indices $k_1,k_2$ and $k_3.$ Unless otherwise stated, we will assume that the implicit basis $(q_k(\mathbf{u}))_{k=1}^M$ is the Bernstein basis. In particular, it forms a partition of unity
\[
\sum_{k=1}^M q_k(\mathbf{u}) \equiv 1.
\]
The choice of degree $m$ determines the number of degrees of freedom the implicit curve will have. If the chosen degree $m$ is sufficiently high, all the algorithms will produce exact results, up to numerical stability. If the degree is lower than necessary, approximations are produced.

Since we are searching for implicit representations and want to avoid the trivial solution $q\equiv 0,$ we add a normalization constraint to $q$ in the approximation. How best to make this choice has been discussed by several authors (see \cite{pratt_1987} for an overview). However, as we use the singular value decomposition (SVD) as the means of approximation, our results are given with the quadratic normalization $\Vert\mathbf{b}\Vert_2=1.$ 

The techniques in this paper focus on minimization of the objective function $q\circ\mathbf{p}$ over the space of polynomials $\{q:\Vert\mathbf{b}\Vert_2=1\},$ where $q$ is defined by (\ref{eq:q}) in a fixed implicit basis. Such a minimization, although not directly minimizing the Hausdorff distance between the implicit and parametric curves, is closely related to the geometric approximation problem. It has been shown that minimization of $q\circ\mathbf{p}$ gives excellent results in geometric space away from singularities \cite{dokken_1997}.

\section{Approximate implicitization - the original approach}\label{sec:orig}

In 1997 a class of techniques for approximate implicitization of rational parametric curves, surfaces and hypersurfaces was introduced in \cite{dokken_1997}. The approximation quality of the techniques was substantiated by a general proof that the methods exhibit very high convergence rates, as shown in Tables \ref{tab:conv_rates_curves} and \ref{tab:conv_rates_surfaces}. Extensions of this original approach, all of which inherit these high convergence rates, form the basis of this paper. 

\begin{table}
\begin{center}
\begin{tabular}{|r|c|c|c|c|c|c|c|c|}
  \hline
   Algebraic degree $m$  & 1 & 2 & 3 & 4  & 5  & 6  & 7  & 8 \\ \hline
   Convergence rate $k$ & 2 & 5 & 9 & 14 & 20 & 27 & 35 & 44 \\ 
  \hline
\end{tabular}
\caption{Convergence rates for approximate implicitization of sufficiently smooth parametric curves in $\mathbb{R}^2$ by algebraic curves of degree $m,$ given by $k=\frac{1}{2}(m+1)(m+2)-1$.}\label{tab:conv_rates_curves}
\end{center}
\end{table}

\begin{table}
\begin{center}
\begin{tabular}{|r|c|c|c|c|c|c|c|c|}
  \hline
   Algebraic degree $m$  & 1 & 2 & 3 & 4 & 5  & 6  & 7  & 8 \\ \hline
   Convergence rate $k$ & 2 & 3 & 5 & 7 & 10 & 12 & 14 & 17 \\ 
  \hline
\end{tabular}
\caption{Convergence rates for approximate implicitization of sufficiently smooth parametric surfaces in $\mathbb{R}^3$ by algebraic surfaces of degree $m,$ given by ${k=\lfloor\frac{1}{6}\sqrt{9+12m^3+72m^2+132m}-\frac{1}{2}\rfloor}.$}\label{tab:conv_rates_surfaces}
\end{center}
\end{table}

The guiding principle behind these methods is to find a polynomial $q$ which minimizes the maximal algebraic error in a given parameter domain $\Omega.$ That is, in the given implicit basis $(q_k)_{k=1}^M,$ to find the coefficients $\mathbf{b}=(b_k)_{k=1}^M$ which solve
\begin{equation}\label{eq:prob}
\min_{\Vert \mathbf{b}\Vert=1}\max_{t\in\Omega}|q(\mathbf{p}(t))|.
\end{equation}
The solution to this problem, which we call the minimax (or uniform) approximation, is not easy to find exactly. However, approximations to the minimax solution can be found directly, using linear algebra. 

We notice that the expression $q(\mathbf{p}(t))$ is a univariate polynomial of degree $mn$ in $t.$ We can thus approach the problem by first factorizing the error expression in a polynomial basis $\bm\alpha(t) = (\alpha_j(t))_{j=1}^{L},$ where $L=mn+1,$ as follows:
\begin{equation}\label{eq:fact}
q(\mathbf{p}(t)) = \bm\alpha(t)^T\mathbf{D}_\alpha\mathbf{b},
\end{equation}
where $\mathbf{D}_\alpha$ is a matrix whose columns are the coefficients of $q_k(\mathbf{p}(t))$ expressed in the $\bm\alpha$-basis and $\mathbf{b}$ is the unknown vector of implicit coefficients. Now, we have
\begin{align}\label{eq:ineq}
\min_{\Vert\mathbf{b}\Vert=1}\max_{t\in\Omega}|q(\mathbf{p}(t))| & = \min_{\Vert\mathbf{b}\Vert=1}\max_{t\in\Omega}|\bm\alpha(t)^T\mathbf{D}_\alpha\mathbf{b}|, \nonumber \\
& \leq \max_{t\in\Omega} \Vert \bm\alpha(t)\Vert_2\min_{\Vert\mathbf{b}\Vert=1}\Vert\mathbf{D}_\alpha\mathbf{b}\Vert_2, \nonumber \\
& = \max_{t\in\Omega} \Vert \bm\alpha(t)\Vert_2 \ \sigma_{\min},
\end{align}
giving an upper bound on the maximal algebraic error, dependent on the choice of basis. Here, we have used the fact that 
\[
\min_{\Vert \mathbf{b} \Vert_2=1}\Vert \mathbf{D}_\alpha\mathbf{b}\Vert_2=\sigma_{\min},
\]
where $\sigma_{\min}$ is the smallest singular value of $\mathbf{D}_\alpha.$ We thus choose $\mathbf{b} = \mathbf{v}_{\min},$ the right singular vector corresponding to $\sigma_{\min},$ as an approximate solution to the problem. The other singular vectors corresponding to larger singular values also give candidates for approximation that generally decrease in quality as the singular values increase \cite{dokken_2006}. It is important to note that the value of $\sigma_{\min}$ is dependent on the choice of basis $\bm\alpha.$

In this paper we use the following `normalization' to compare the approximation qualities of different polynomial bases:
\begin{equation}\label{eq:normalization}
\max_{t\in\Omega} \Vert \bm\alpha(t)\Vert_2=1.
\end{equation} 
It should be noted that bases with different scaling coefficients on the individual basis functions will produce different results. For example, the standard Legendre basis, where each basis function $(P_j)_{j=1}^L$ has the normalization $P_j(1) = 1$, will produce quantitatively different results to the Legendre basis normalized with respect to the $L_2$-inner product. This choice of scaling is somewhat arbitrary, but experience shows that small differences in the scaling result in small differences in the approximation. Thus, for the bases we study, standard choices will be made.

Given a choice of basis functions $\bm\alpha = (\alpha_i)_{i=1}^{L},$ we summarize the general approach of this section in the following algorithm:\newline

\begin{algorithm}
Input: a rational parametric curve $\mathbf{p}(t)$ of degree $n$ on the interval $[0,1],$ and a degree $m$ for the implicit polynomial:
\begin{enumerate}
\item For each basis function $(q_k)_{k=1}^M,$ compute the vector $\mathbf{d}_k = (d_{j,k})_{j=1}^{L}$ of coefficients such that $q_k(\mathbf{p}(t)) = \sum_{j=1}^L d_{j,k} \alpha_j(t),$
\item Construct a matrix $\mathbf{D}_\alpha = (\mathbf{d}_k)_{k=1}^M$ from the column vectors $\mathbf{d}_k,$
\item Perform an SVD $\mathbf{D}_\alpha=\mathbf{U}\bm{\Sigma}\mathbf{V}^T,$ and select $\mathbf{b}=\mathbf{v}_{\min},$ the right singular vector corresponding to the smallest singular value $\sigma_{\min}$ as an approximate solution.
\end{enumerate}
\end{algorithm}
This algorithm is known as the original method in the $\bm\alpha$-basis; however, for this paper, we will refer to it simply as the $\bm\alpha$-method for an arbitrary basis $\bm\alpha.$

\section{Weak approximate implicitization}\label{sec:weak}

Two approaches to approximate implicitization by continuous least squares minimization of the objective function were introduced simultaneously in 2001 in \cite{corless_2001,dokken_2001_2}, and further developed in \cite{dokken_2006}. These methods perform minimization in the weighted $L_2$-inner product:
\begin{equation}\label{eq:lsprob}
\min_{\Vert\mathbf{b}\Vert_2 = 1}\langle q\circ\mathbf{p}\ ,\ q\circ\mathbf{p}\rangle_w = \min_{\Vert\mathbf{b}\Vert_2 = 1} \int_{\Omega} w(t)q(\mathbf{p}(t))^2 \text{ d}t,
\end{equation}
where $w(t)$ is some weight function defined on the domain of approximation $\Omega.$

After choosing a basis for the implicit representation we obtain a linear algebra problem as before:
\begin{equation}\label{eq:minqf}
\min_{\Vert\mathbf{b}\Vert_2 = 1} \mathbf{b}^T\mathbf{M}_w\mathbf{b},
\end{equation}
where $\mathbf{M}_w = (m_{k,l})_{k=1,l=1}^{M,M}$ is given by 
\begin{equation}\label{eq:M}
m_{k,l} = \langle q_k\circ\mathbf{p}\ ,\ q_l\circ\mathbf{p} \rangle_w.
\end{equation}
This approach eliminates the need for a choice of basis, but a choice of weight function is necessary. The standard approach in \cite{corless_2001,dokken_2001_2} has been to take $w(t)\equiv 1.$ Later we will discuss the benefits of choosing the Chebyshev weight function on $[0,1],$ $w(t)=1/\sqrt{t(1-t)},$ instead. 

This problem, unlike the minimax problem, can be solved directly if the parametric components are integrable. We simply take $\mathbf{b} = \mathbf{v}_{\min},$ the eigenvector corresponding to the smallest eigenvalue of $\mathbf{M}_w.$ Since the matrix is symmetric, it has orthonormal eigenvectors. Similarly to the previous method, eigenvectors corresponding to larger eigenvalues give gradually degenerating approximations.

We summarize this algorithm, for a given weight function $w,$ as follows:\newline

\begin{algorithm}
Input: a parametric curve $\mathbf{p}(t)$ on the interval $[0,1],$ and a degree $m$ for the implicit polynomial:
\begin{enumerate}
\item Construct a matrix $\mathbf{M}_w$ by performing the integrals according to 
\[m_{k,l} = \langle q_k\circ\mathbf{p},q_l\circ\mathbf{p} \rangle_w\]
for $k,l=1,\ldots,M,$
\item Compute the eigendecomposition $\mathbf{M}_w = \mathbf{V}\bm{\Lambda}\mathbf{V}^T,$
\item Select $\mathbf{b}=\mathbf{v}_{\min},$ the eigenvector corresponding to the smallest eigenvalue $\lambda_{\min}.$
\end{enumerate}
\end{algorithm}

Algorithms following the procedure above are known under different names in the literature; weak approximate implicitization in \cite{dokken_2006}, numerical implicitization in \cite{corless_2001} and the eigenvalue method in \cite{emiris_2005}. For the rest of this paper we will call it the weak method. 

As previously mentioned, the methods of this section are suitable for either exact or approximate implicitization. They can be performed using either symbolic or numerical integration, however the former is generally only required when performing exact implicitizations in exact precision. For applications where floating point precision is sufficient, numerical quadrature rules provide a much faster alternative. The methods also have wide generality since they can be applied to any parametric forms with integrable components, not only rational parametric forms. There are, however, some significant disadvantages in choosing this method in practice. Firstly, due to the high degree of the integrand, the integrals can take a relatively long time to evaluate, even when numerical quadrature rules are used. Secondly, and more importantly, the condition numbers of the matrices $\mathbf{M}_w$ can be significantly larger than the condition numbers of the $\mathbf{D}_{\alpha}$ matrices from the previous section, leading to issues with numerical stability.

Since $\mathbf{M}_w$ is a symmetric positive (semi) definite matrix, it has a decomposition $\mathbf{M}_w = \mathbf{K}^T\mathbf{K},$ where the singular values of $\mathbf{K}$ are the square roots of the singular values of $\mathbf{M}_w.$ This decomposition is not unique; however, in the next section we show that it is possible to construct such a matrix directly, without first computing $\mathbf{M}_w.$ The condition number of $\mathbf{K}$ will be the square root of the condition number of $\mathbf{M}_w,$ hence we obtain the solution to the least squares problem in a more stable manner. Example \ref{ex:circle} demonstrates how the lack of stability in the weak method compares to the new method described in the following section.

\section{Approximate implicitization using orthonormal bases}\label{sec:orthog}

To make the connection between the original method and the weak method in the previous section, we consider the factorization (\ref{eq:fact}). We can then express $\mathbf{M}_w$ in terms of $\mathbf{D}_\alpha$ and a new matrix $\mathbf{A}$ \cite{dokken_2006}. That is, we get
\begin{equation}\label{eq:dad}
\int_{\Omega} w(t)q(\mathbf{p}(t))^2 \text{ d}s = \mathbf{b}^T \mathbf{D}_\alpha^T \mathbf{A} \mathbf{D}_\alpha \mathbf{b}
\end{equation}
where $\mathbf{A} = (a_{i,j})_{i=1,j=1}^{L,L}$ is given by $a_{i,j} = \langle \alpha_i,\alpha_j \rangle_w.$
Note that $\mathbf{A}$ is a Gramian matrix in the $\bm\alpha$-basis on the weighted $L^2$-inner product. This gives us a clue as to how to improve the weak method by the use of orthonormal bases. A polynomial basis $(\alpha_i)_{i=1}^L$ is said to be orthonormal with respect to the weighted $L^2$-inner product $\langle \cdot, \cdot \rangle_w,$ if 
\[
\langle \alpha_i,\alpha_j \rangle_w = \delta_{ij}, \quad \text{ for all } i,j=1,\ldots,L,
\]
with $\delta_{ij}$ denoting the Kronecker delta.

\begin{thm}\label{thm:DTD}
Let $\bm\alpha$ be a polynomial basis, orthonormal with respect to the given inner product $\langle\cdot,\cdot\rangle_w,$ and $\mathbf{D_{\alpha}}$ and $\mathbf{M}_w$ be defined by (\ref{eq:fact}) and (\ref{eq:M}) respectively. Then $\mathbf{M}_w = \mathbf{D}^T_{\alpha}\mathbf{D_{\alpha}}.$
\end{thm}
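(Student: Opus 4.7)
The plan is to prove the identity entry by entry by expanding the inner product $m_{k,l}$ using the factorization (\ref{eq:fact}) and then applying orthonormality. Since both sides of the claimed equation are $M\times M$ matrices, it suffices to show that their $(k,l)$-entries agree for every pair $1\le k,l\le M$.

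First, I would observe that the $k$-th column $\mathbf{d}_k$ of $\mathbf{D}_\alpha$ is, by construction, the coefficient vector of $q_k\circ\mathbf{p}$ in the $\bm\alpha$-basis, so that
\[
q_k(\mathbf{p}(t)) = \sum_{i=1}^{L} d_{i,k}\,\alpha_i(t), \qquad q_l(\mathbf{p}(t)) = \sum_{j=1}^{L} d_{j,l}\,\alpha_j(t).
\]
Substituting these expansions into the definition (\ref{eq:M}) and using the bilinearity of $\langle\cdot,\cdot\rangle_w$ gives
\[
m_{k,l} = \sum_{i=1}^{L}\sum_{j=1}^{L} d_{i,k}\,d_{j,l}\,\langle \alpha_i,\alpha_j\rangle_w.
\]
The orthonormality assumption replaces $\langle \alpha_i,\alpha_j\rangle_w$ by $\delta_{ij}$, collapsing the double sum to $\sum_{i=1}^{L} d_{i,k}\,d_{i,l}$, which is precisely the $(k,l)$-entry of $\mathbf{D}_\alpha^T\mathbf{D}_\alpha$.

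An equivalent shortcut, which I would mention as an alternative, is to start directly from the already-derived identity (\ref{eq:dad}): since $\mathbf{A}$ is the Gramian of $\bm\alpha$ in $\langle\cdot,\cdot\rangle_w$, orthonormality makes $\mathbf{A}=\mathbf{I}_L$, and so $\mathbf{b}^T\mathbf{M}_w\mathbf{b} = \mathbf{b}^T\mathbf{D}_\alpha^T\mathbf{D}_\alpha\mathbf{b}$ for every $\mathbf{b}\in\mathbb{R}^M$; symmetry of both matrices then upgrades this quadratic-form identity to the matrix identity $\mathbf{M}_w = \mathbf{D}_\alpha^T\mathbf{D}_\alpha$.

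Honestly, there is no real obstacle here: the content of the theorem is the bookkeeping observation that the Gramian of the composed functions $\{q_k\circ\mathbf{p}\}$ equals $\mathbf{D}_\alpha^T\mathbf{D}_\alpha$ precisely when the intermediate basis $\bm\alpha$ is self-dual under $\langle\cdot,\cdot\rangle_w$. The only thing to be a little careful about is keeping the indexing straight, in particular that the \emph{columns} of $\mathbf{D}_\alpha$ (not the rows) carry the $q_k$-expansion coefficients, so that the product $\mathbf{D}_\alpha^T\mathbf{D}_\alpha$ produces inner products of columns and hence pairs up the correct $k$ and $l$.
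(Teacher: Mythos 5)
Your proof is correct, and your ``alternative shortcut'' is in fact exactly the paper's proof: the paper invokes (\ref{eq:dad}), notes that $\mathbf{M}_w = \mathbf{D}_\alpha^T \mathbf{A} \mathbf{D}_\alpha$ for any basis, and then observes that orthonormality forces $\mathbf{A}=\mathbf{I}$. Your primary entry-by-entry computation is simply an unwound version of the same argument, and you add a useful point the paper leaves implicit: since (\ref{eq:dad}) is a statement about quadratic forms, one must appeal to the symmetry of $\mathbf{M}_w$ and $\mathbf{D}_\alpha^T\mathbf{A}\mathbf{D}_\alpha$ to upgrade it to the matrix identity $\mathbf{M}_w = \mathbf{D}_\alpha^T\mathbf{A}\mathbf{D}_\alpha$.
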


\begin{proof}
By (\ref{eq:dad}) and the definition of $\mathbf{M}_w,$ we have $\mathbf{M}_w = \mathbf{D}_\alpha^T \mathbf{A} \mathbf{D}_\alpha$ for any basis $\bm\alpha.$ But since $\bm\alpha$ is orthonormal with respect to the inner product ${\langle\cdot,\cdot\rangle_w},$ we have $a_{i,j} = \langle \alpha_i,\alpha_j \rangle_w = \delta_{i,j}$ (i.e., $\mathbf{A}=\mathbf{I}$). 
\end{proof}

We notice that the right singular vectors of $\mathbf{D}_\alpha$ are exactly the orthonormal eigenvectors of $\mathbf{D}_\alpha^T\mathbf{D_\alpha},$ and the singular values of $\mathbf{D}_\alpha$ are the non-negative square roots of the eigenvalues of $\mathbf{D}_\alpha^T\mathbf{D_\alpha}.$ Thus, with Theorem \ref{thm:DTD} in mind, we see that if the $\bm\alpha$-basis is orthonormal with respect to $w,$ then the results of the original method and the weak method are the same\footnote{Note that we do not specify that the orthonormal basis must be a polynomial basis, only a basis for the relevant space of functions. For example, curves defined by trigonometric polynomials can be treated in a similar way, since that basis is orthogonal.}. That is, $\mathbf{D}_{\alpha}$ is a candidate for the matrix $\mathbf{K}$ from the previous section. Such a matrix, as defined by (\ref{eq:fact}), can also be given elementwise by
\begin{equation}\label{eq:orthelement}
d_{j,k} = \int_{\Omega} w(t)\alpha_j(t)q_k(\mathbf{p}(t)) \text{ d}t.
\end{equation}
In practice, there often exist algorithms for computing coefficient expansions in orthogonal bases that are more efficient than using the elementwise definition. We will mention later how the Chebyshev and Legendre methods can utilize algorithms based on the fast Fourier transform (FFT) to generate the $\mathbf{D}_\alpha$ matrices.

We should note that the matrix $\mathbf{D}_\alpha$ is of dimension ${L\times M},$ compared with the ${M\times M}$ matrix $\mathbf{M}_w.$ For ${n\geq3},$ we have ${L\geq M},$ so finding the singular vectors of $\mathbf{D}_\alpha$ may be more computationally expensive than computing the eigenvectors of $\mathbf{M}_w.$ However, since the matrix construction is usually the dominant part of the algorithm, these differences do not affect the overall complexity of the algorithms. Moreover, the increase in accuracy more than justifies any small increase in computational complexity in part of the algorithm.

\subsection{Example}\label{ex:circle}

In order to compare the numerical stability of the two approaches to least squares minimization of the objective function, we turn to a familiar example; exact implicitization of a rational parametric circular arc, which is defined in projective space by 
\[
\mathbf{p}(t) = 
	\begin{pmatrix}
	p_1(t), p_2(t), h(t)	
	\end{pmatrix} = 
	\begin{pmatrix}
	{2t}, {1-t^2}, {1+t^2}
	\end{pmatrix}, \quad
	t\in[0,1].
\]
We perform the implicitization using the homogeneous Bernstein basis functions $q_k(u,v,w)\in\{u^2,2uv,2uw,v^2,2vw,w^2\},$ for $k=1,\ldots,6,$ and degree $m=2.$ If performed using exact arithmetic, the implicitization vector given by both the weak and orthonormal basis methods is 
\[
\mathbf{b} = (3^{-1/2},0,0,3^{-1/2},0,-3^{-1/2}).
\]
However, performing the algorithms in double precision, we obtain the results
\begin{eqnarray*}
\begin{split}
\mathbf{b}_\mathrm{weak} =  (0.577350269173099, -1.63\times10^{-11}, 2.19\times10^{-11}&,\\0.577350269178602, 1.87\times10^{-11},-0.5773&50269217176)
\end{split}\\
\begin{split}
\mathbf{b}_\mathrm{orth} = (0.577350269189627, 5.52\times10^{-16}, -8.47\times10^{-16}&,\\0.577350269189626, -4.16\times10^{-16}, -0.5773&50269189625)
\end{split}
\end{eqnarray*}
for the weak and the orthogonal basis methods respectively\footnote{The orthogonal basis method was implemented with Legendre polynomials, as described in Section \ref{ssec:jacobi}.}. Examining the respective relative errors (in the infinity norm)
\begin{eqnarray*}
\Vert \mathbf{b}_\mathrm{weak} -\mathbf{b} \Vert_\infty / \Vert \mathbf{b} \Vert_\infty & = & 4.77\times10^{-11}, \\
\Vert \mathbf{b}_\mathrm{orth} -\mathbf{b} \Vert_\infty / \Vert \mathbf{b} \Vert_\infty & = & 1.73\times10^{-15},
\end{eqnarray*}
we see that the orthogonal basis method preserves the accuracy much better than the weak method, with the former only preserving approximately 11 digits of accuracy. The orthogonal basis method preserves all but the last the digit of the implicitization, up to double precision. It should be noted that this is an example of implicitization with degree $m=2;$ as the degree is raised, such numerical errors can become more serious. The example in Section \ref{ssec:vis} shows how higher degree implicitizations using the weak method can give unpredictable results.

\section{Examples of the original approach with different bases}\label{sec:bases}

The previous sections justified why the original approach is preferable to the weak approach in cases where the expression $q(\mathbf{p}(t))$ can be expressed in terms of orthogonal functions. In this section we will look at examples of specific implementations using orthogonal polynomial bases. We will also consider alternative implementations using non-orthogonal bases, which produce approximations to the least squares solution. We state some simple propositions which unify the methods and also consider computational aspects of the algorithms.

\subsection{Jacobi polynomial bases}\label{ssec:jacobi}

The most commonly used orthogonal polynomial bases in approximation theory are the Legendre basis $(P_j)_{j=1}^L$ and the Chebyshev basis (of the first kind) $(T_j)_{j=1}^L.$ These are both special cases of Jacobi polynomials, which are orthogonal with respect to the weight functions defined by 
\[
w_{\alpha,\beta}(t) = t^\alpha (1-t)^\beta,
\]
on the interval $[0,1],$ and are defined for $\alpha,\beta>-1.$ The Legendre and Chebyshev cases are given by ${\alpha=\beta=0}$ and ${\alpha=\beta=-\frac{1}{2}}$ respectively. It is well known that the Chebyshev expansion has an efficient construction via use of the discrete cosine transform (DCT-I) which can be implemented via fast Fourier transform (FFT) \cite{battles_2004}. The Legendre expansion can also be constructed efficiently by transforming from the Chebyshev coefficients (in $O(n)$ operations for a degree $n$ polynomial \cite{alpert_1991}). The implementation of approximate implicitization exploiting the speed of the FFT will be the subject of another paper. Here we will look at the properties of the matrices $\mathbf{D}_P$ for the Jacobi bases, and the properties of the approximations. The following proposition is an analogue of Theorem 4.3 in \cite{dokken_2001}, for Jacobi polynomial bases.

\begin{prop}
Let $\mathbf{D}_P$ be the matrix defined by $(\ref{eq:fact})$ in a Jacobi polynomial basis ${(P_j(t))_{j=1}^L},$ for any ${\alpha,\beta>-1},$ normalized to ${\Vert P_j \Vert_{\infty}=1}$ for ${j=1,\ldots,L}.$ Then
\[
\sum_{k=1}^M d_{j,k} = 
\begin{cases}
1, & \text{if } j = 1, \\
0, & \text{if } j \geq 2. \\
\end{cases}
\]
\end{prop}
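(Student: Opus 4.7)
The plan is to combine two facts: the partition of unity property of the Bernstein basis $(q_k)_{k=1}^M$ stated in Section \ref{sec:prelim}, and the uniqueness of expansion in the Jacobi basis. The key observation I would make first is that under the normalization $\Vert P_j \Vert_\infty = 1$, the degree-zero member $P_1$ of any Jacobi family reduces to the constant $P_1 \equiv 1$, since a constant polynomial of sup norm $1$ is (up to an irrelevant sign convention) identically $1$. This identifies the summation condition in the statement with the assertion that $\sum_{k=1}^M q_k(\mathbf{p}(t)) \equiv 1$.

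From the definition (\ref{eq:fact}) of $\mathbf{D}_P$, the $k$-th column encodes the expansion $q_k(\mathbf{p}(t)) = \sum_{j=1}^L d_{j,k} P_j(t)$. Summing over $k$ and interchanging the order of summation gives
\[
\sum_{k=1}^M q_k(\mathbf{p}(t)) = \sum_{j=1}^L \Bigl(\sum_{k=1}^M d_{j,k}\Bigr) P_j(t).
\]
The partition of unity $\sum_{k=1}^M q_k(\mathbf{u}) \equiv 1$ collapses the left-hand side to $1 = P_1(t)$, so the displayed identity reads $P_1(t) = \sum_j \bigl(\sum_k d_{j,k}\bigr) P_j(t)$.

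Finally, since $(P_j)_{j=1}^L$ is a basis of $\mathbb{P}_{L-1}$ (Jacobi polynomials of distinct degrees are linearly independent for any admissible $\alpha,\beta>-1$), I can equate coefficients: the coefficient of $P_1$ must be $1$ and the coefficients of $P_j$ for $j\geq 2$ must vanish. This yields the claim.

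I do not anticipate a real obstacle; the only conceptual point worth flagging is that the partition of unity of $(q_k)$ is what carries the weight of the result, while the Jacobi hypothesis enters only through $P_1 \equiv 1$ and linear independence. In particular the proof is independent of $\alpha$ and $\beta$, which is why the conclusion holds uniformly over the entire Jacobi family.
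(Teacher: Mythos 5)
Your proposal is correct and follows essentially the same route as the paper: the paper takes $\mathbf{b}=(1,\ldots,1)$ so that $q\circ\mathbf{p}\equiv 1$ by the partition of unity, expands via (\ref{eq:fact}) to get $\sum_j P_j(t)\sum_k d_{j,k}=1$, and concludes by uniqueness of the expansion in the degree-ordered basis, which is exactly your column-summing argument. No gaps.
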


\begin{proof}
Since an orthogonal polynomial basis is degree ordered, one of the functions must be identically a non-zero constant, which, by the normalization condition is equal to 1. Consider the vector ${\mathbf{b} = (1,\dots,1)},$ which ensures that ${q(\mathbf{p}(t))=1}$ for all ${t\in [0,1]},$ by the partition of unity on the implicit basis ${(q_k)_{k=1}^M}.$ Clearly, only the basis function of degree zero can have a non-zero coefficient. By (\ref{eq:fact}), we have the expansion
\begin{eqnarray*}
q(\mathbf{p}(t)) & = & \sum_{j=1}^L P_j(t) \sum_{k=1}^M d_{j,k}. 
\end{eqnarray*}
But this is equal to 1 if and only if $\sum_{k=1}^M d_{1k} = 1$ and $\sum_{k=1}^M d_{j,k} = 0$ for $j\geq 2.$ 
\end{proof}

One property of Chebyshev expansions of a continuous function is that the error introduced by truncating the expansion is dominated by first term after the truncation, if the coefficients decay quickly enough \cite{gil_2007}. For curves that we wish to approximate (with relatively simple forms), the coefficients do tend to decay quickly, so the coefficients in the lower rows of the matrix tend to be dominated by those above them. 

The Chebyshev basis is well known for giving good approximations to minimax problems in approximation theory (see \cite{gil_2007} for an overview). This also seems to be the case for approximate implicitization, with the resulting error normally being close to equioscillating. In fact, experiments show that in almost all test cases, the number of roots in the error function given by the Chebyshev method is greater than or equal to the convergence rate, for the given implicit degree $m$ (see Table \ref{tab:conv_rates_curves}). Thus the Chebyshev method appears to give a `near best' approximation in the sense that the error normally oscillates a maximum number of times.

Our experience in the choice between Legendre and Chebyshev polynomials is that the difference in approximation quality is minor. Chebyshev expansions are slightly quicker to compute and require less programming effort than their Legendre counterparts \cite{alpert_1991}. In addition they tend to eliminate the spike in error at the end of the intervals that appears in the Legendre method. However, both algorithms provide efficient and numerically stable methods for (weighted) least squares approximation over the entire interval $\Omega.$

\subsection{Discrete approximate implicitization - the Lagrange basis}\label{ssec:lag}

One of the simplest and fastest implementations of approximate implicitization is to perform discrete least squares approximation of points sampled on the parametric manifold, similar to the methods in \cite{bajaj_1993,pratt_1987}. In our setting, this can be implemented as the original method but with the $\bm\alpha$-basis chosen to be the Lagrange basis at the given nodes. The matrix $\mathbf{D}_{\mathrm{L},L}$ defined by $(\ref{eq:prob})$ in the Lagrange basis of degree $L-1$ can be given elementwise by 
\begin{equation}\label{eq:lagmat}
d_{j,k} = q_k(\mathbf{p}(t_j)), \quad j=1,\ldots,L, \text{ and } k=1,\ldots,M,
\end{equation}
where $t_j\in\Omega$ are nodes in the parameter domain. 

The result of approximate implicitization in the Lagrange basis depends both on the number of points sampled and the density of the point distribution in the parameter domain. Since Lagrange polynomials are neither orthogonal nor degree ordered, they do not solve a least squares problem of type (\ref{eq:lsprob}). However, we can form a direct relation between the discrete and continuous least squares problems, as follows: 

\begin{prop}\label{prop:lagrange_convergence}
Let $\mathbf{p}(t)$ be a planar parametric curve with bounded, piecewise continuous components on the interval $[0,1]$ and let $\mathbf{D}_{\mathrm{L},L}$ be the matrix defined by (\ref{eq:lagmat}) at uniform nodes. Then the $(k,l)$ element of ${\mathbf{D}_{\mathrm{L},L}}^T\mathbf{D}_{\mathrm{L},L}$ converges to a constant multiple of the $(k,l)$ element of $\mathbf{M}_1,$ defined by (\ref{eq:M}), as the number of samples $L$ is increased.
\end{prop}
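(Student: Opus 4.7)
The plan is to unpack the matrix product and recognize it directly as a Riemann sum. From (\ref{eq:lagmat}),
\[
(\mathbf{D}_{\mathrm{L},L}^T\mathbf{D}_{\mathrm{L},L})_{k,l} = \sum_{j=1}^L d_{j,k}\,d_{j,l} = \sum_{j=1}^L q_k(\mathbf{p}(t_j))\,q_l(\mathbf{p}(t_j)),
\]
so with uniformly spaced nodes of spacing $h=1/L$ (up to the endpoint convention), $h$ times this sum is a standard Riemann sum for
\[
m_{k,l} = \int_0^1 q_k(\mathbf{p}(t))\,q_l(\mathbf{p}(t))\,dt,
\]
i.e., the $(k,l)$ entry of $\mathbf{M}_1$ from (\ref{eq:M}) with $w\equiv 1$. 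Thus the ``constant multiple'' referred to in the statement is just the reciprocal node-spacing factor $L$, which is the same for every pair $(k,l)$, and the claim reduces to showing convergence of this Riemann sum.

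The substantive step is to justify that convergence. Each $q_k$ is a polynomial, so its composition with the bounded, piecewise continuous map $\mathbf{p}$ is again bounded and piecewise continuous on $[0,1]$, and the pointwise product $(q_k\circ\mathbf{p})(q_l\circ\mathbf{p})$ inherits these properties. By the classical Lebesgue criterion, any bounded function whose set of discontinuities has measure zero---in particular, any bounded function with only finitely many jump discontinuities---is Riemann integrable on a compact interval, and its uniform Riemann sums converge to its integral. Applying this to $(q_k\circ\mathbf{p})(q_l\circ\mathbf{p})$ yields
\[
\tfrac{1}{L}(\mathbf{D}_{\mathrm{L},L}^T\mathbf{D}_{\mathrm{L},L})_{k,l} \longrightarrow m_{k,l} \quad \text{as } L\to\infty,
\]
which is the desired conclusion.

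The only real technical wrinkle is that Riemann-sum convergence is stated most cleanly for continuous integrands, so at the finitely many discontinuity points of $\mathbf{p}$ one must either invoke the Lebesgue criterion directly, or split $[0,1]$ into subintervals of continuity and apply the classical continuous-integrand result on each, absorbing the boundary-sample contribution (at most $O(1/L)$) into a vanishing error. A minor secondary issue is fixing the precise convention for ``uniform nodes'' (left, right, or midpoint), but this only perturbs the scaling factor by $O(1/L)$ and does not affect the asymptotic limit.
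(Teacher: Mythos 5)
Your proof is correct and follows essentially the same approach as the paper: expand $(\mathbf{D}_{\mathrm{L},L}^T\mathbf{D}_{\mathrm{L},L})_{k,l}$ into a sum of products $q_k(\mathbf{p}(t_j))q_l(\mathbf{p}(t_j))$, scale by the node spacing, and recognize the result as a Riemann sum converging to the $(k,l)$ entry of $\mathbf{M}_1$ by boundedness and piecewise continuity. The paper uses $h_L = 1/(L-1)$ with $t_j = (j-1)h_L$ and simply asserts convergence of the sum to the integral, whereas you add the (correct, if slightly more explicit) appeal to the Lebesgue criterion; the difference is immaterial.
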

\begin{proof}
Since each of the parametric components of the curve is bounded and piecewise continuous on the interval and $q_k$ is a polynomial, we know that $q_k\circ\mathbf{p}$ is bounded and piecewise continuous for each $k.$ Let $h_L := 1/(L-1).$ Then, for uniform samples $(t_j)_{j=1}^L,$ where $t_j = h_L(j-1)$ in the parameter domain we see that
\begin{eqnarray*}
\lim_{L\rightarrow\infty}h_L ({\mathbf{D}_{\mathrm{L},L}}^T\mathbf{D}_{\mathrm{L},L})_{k,l} & = & \lim_{L\rightarrow\infty}\sum_{j=1}^L h_L q_k(\mathbf{p}(t_j))q_l(\mathbf{p}(t_j)), \\
& = & \int_{\Omega} q_k(\mathbf{p}(t))q_l(\mathbf{p}(t)) \text{ d}t, \\
& = & (\mathbf{M}_1)_{k,l},
\end{eqnarray*}
for $k,l=1,\ldots,M.$
\end{proof}
Sampling more points gives ever closer approximations of the true least squares approximation (for any given $L,$ the constant $h_L$ is absorbed into the singular values of the matrix and does not affect the singular vectors). However, as we have seen, the Legendre method can solve the (unweighted) least squares problem exactly, without excessive sampling and in a way that best preserves the numerical precision. Although the point sampling method does tend to give good approximations when the number of samples $L$ is large enough, it is a relatively small increase in computational complexity and programming effort to use Legendre expansions instead. The main strength of the Lagrange method lies in its simplicity: it is easy to implement, computationally inexpensive and highly parallelizable. 
 
Alternative choices of nodes are also interesting to investigate. Using the inequality (\ref{eq:ineq}), we can introduce the bound
\begin{eqnarray*}
\min_{\Vert\mathbf{b}\Vert_2=1}\max_{t\in\Omega} |q(\mathbf{p}(t))| & \leq & \Vert \bm\alpha(t) \Vert_2 \ \sigma_{\min}, \\
& \leq & \Lambda(\bm\alpha) \ \sigma_{\min}, \\
\end{eqnarray*}
where $\Lambda(\bm\alpha)$ is the Lebesgue constant from interpolation theory defined by $\Lambda(\bm\alpha) = \max_{t\in\Omega} \Vert \bm\alpha(t) \Vert_1.$ Thus we may expect to obtain better results from point distributions with smaller Lebesgue constants. In particular, we will see in Section \ref{ssec:bern} that by using the Bernstein basis we achieve a Lebesgue constant of $1.$ However, it is possible that with smaller Lebesgue constants come larger singular values. Thus it is important to balance between minimizing the Lebesgue constant and the singular value in order to obtain the best bound for the algebraic error.

A point distribution of particular interest is that of the Chebyshev points. On $[0,1]$ this is defined as:
\begin{equation}\label{eq:cheb_points}
t_j = \frac{1}{2}\left(1-\cos\left(\frac{j\pi-\pi}{L-1}\right)\right), \quad j=1,\ldots,L.
\end{equation}
Conversion from the Lagrange basis at Chebyshev points to the Chebyshev basis can be performed by a DCT-I of the Lagrange coefficients \cite{gil_2007}. Implementing the algorithm in this way gives a fast procedure for the Chebyshev method of Section \ref{ssec:jacobi}. Proposition \ref{prop:lagrange_convergence} can be extended to show that sampling at an increasing number of Chebyshev points causes the solution to converge to that of the weak method with the Chebyshev weight function (see Proposition \ref{prop:cheb_lagrange_convergence}). However, interpolation in these points is known from approximation theory to give very good approximation properties of its own \cite{battles_2004}. One reason for this is the small Lebesgue constants associated with Chebyshev points. 

\begin{prop}\label{prop:cheb_lagrange_convergence}
Let $\mathbf{p}(t)$ be a planar parametric curve with bounded, piecewise continuous components on the interval $[0,1]$ and let $\mathbf{D}_{\mathrm{L},L}$ be the matrix defined by (\ref{eq:lagmat}) at Chebyshev points given by (\ref{eq:cheb_points}). Then the $(k,l)$ element of ${\mathbf{D}_{\mathrm{L},L}}^T\mathbf{D}_{\mathrm{L},L}$ converges to a constant multiple of the $(k,l)$ element of $\mathbf{M}_w,$ defined by (\ref{eq:M}) with the weight function $w(t) = 1/\sqrt{t(1-t)},$ as the number of samples $L$ is increased.
\end{prop}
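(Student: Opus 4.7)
The plan is to follow the structure of the proof of Proposition \ref{prop:lagrange_convergence}, but to exploit the classical observation that the Chebyshev nodes are the images of \emph{equispaced} nodes on $[0,\pi]$ under the map $\theta\mapsto\tfrac{1}{2}(1-\cos\theta)$. Concretely, I would set $\theta_j := (j-1)\pi/(L-1)$ and $h_L := \pi/(L-1)$, so that the nodes in (\ref{eq:cheb_points}) are precisely $t_j = \tfrac{1}{2}(1-\cos\theta_j)$ with the $\theta_j$ uniformly spaced on $[0,\pi]$. The discrete sum defining $({\mathbf{D}_{\mathrm{L},L}}^T\mathbf{D}_{\mathrm{L},L})_{k,l}$ is then a Riemann sum, in the $\theta$ variable, of a fixed function on $[0,\pi]$ at equispaced points, and the Chebyshev weight $1/\sqrt{t(1-t)}$ will arise purely from the Jacobian of this substitution.

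Carrying out the change of variables $t = \tfrac{1}{2}(1-\cos\theta)$ gives $dt = \tfrac{1}{2}\sin\theta\,d\theta$ and $\sqrt{t(1-t)} = \tfrac{1}{2}\sin\theta$ on $[0,\pi]$, so that $dt/\sqrt{t(1-t)} = d\theta$. Setting $g(\theta) := q_k(\mathbf{p}(\tfrac{1}{2}(1-\cos\theta)))\,q_l(\mathbf{p}(\tfrac{1}{2}(1-\cos\theta)))$, this yields
\[
(\mathbf{M}_w)_{k,l} = \int_0^1 \frac{q_k(\mathbf{p}(t))\,q_l(\mathbf{p}(t))}{\sqrt{t(1-t)}}\,dt = \int_0^\pi g(\theta)\,d\theta.
\]
Since the components of $\mathbf{p}$ are bounded and piecewise continuous and $q_k,q_l$ are polynomials, $g$ is bounded and piecewise continuous on $[0,\pi]$, hence Riemann integrable. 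The equispaced Riemann-sum convergence theorem then gives
\[
\lim_{L\to\infty} h_L\,({\mathbf{D}_{\mathrm{L},L}}^T\mathbf{D}_{\mathrm{L},L})_{k,l} = \lim_{L\to\infty} h_L\sum_{j=1}^L g(\theta_j) = \int_0^\pi g(\theta)\,d\theta = (\mathbf{M}_w)_{k,l},
\]
which establishes the stated convergence, with the multiplicative factor $h_L^{-1}$ absorbed uniformly into every entry (and therefore not affecting the singular vectors).

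There is no serious obstacle; the only step requiring any real care is recognising that the Chebyshev weight is not imposed by hand but is forced by the Jacobian of the cosine substitution that equispaces the Chebyshev nodes. Once this identification is made, the Riemann-sum argument used for the uniform-node case transfers verbatim to the interval $[0,\pi]$. A minor technical point, that the original integrand is improper at $t\in\{0,1\}$ while $g$ is bounded on $[0,\pi]$, is handled automatically by the change of variables, since convergence of the improper integral is equivalent to finiteness of the proper integral on the $\theta$-side.
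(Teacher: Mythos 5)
Your argument is essentially identical to the paper's proof: both apply the cosine substitution that turns the Chebyshev nodes into equispaced points, identify the resulting discrete sum as a Riemann sum in the angular variable, and let the Jacobian of the substitution produce the weight $1/\sqrt{t(1-t)}$. The only difference is that you parametrize over $[0,\pi]$ while the paper uses $\theta\in[0,1]$ via $t(\theta)=(1-\cos(\pi\theta))/2$, which merely changes the (irrelevant) constant multiple.
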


\begin{proof}
We use the change of variable $t(\theta) = (1-\cos(\pi\theta))/2,$ and note first that 
\[
\sqrt{t(\theta)(1-t(\theta))} = \sin(\pi\theta)/2, \quad \text{ and } \ \ \frac{\text{d}t}{\text{d}\theta} = \pi\sin(\pi\theta)/2.
\]
As in Proposition \ref{prop:lagrange_convergence}, $q_k\circ\mathbf{p}$ is bounded and piecewise continuous for each $k.$ In order to simplify the notation, let $f_k=q_k\circ\mathbf{p}$ for each $k,$ and let $h_L = 1/(L-1).$ Then, for uniform samples $\theta_j$ in the domain $[0,1]$ (which correspond to the Chebyshev points $t_j=t(\theta_j)$ in the parameter domain), we see that
\begin{eqnarray*}
\lim_{L\rightarrow\infty}h_L ({\mathbf{D}_{\mathrm{L},L}}^T\mathbf{D}_{\mathrm{L},L})_{k,l} & = & \lim_{L\rightarrow\infty}\sum_{j=1}^L h_L f_k(t(\theta_j))f_l(t(\theta_j)), \\
& = & \int_{\Omega} f_k(t(\theta))f_l(t(\theta)) \text{ d}\theta, \\
& = & \int_{\Omega} \frac{f_k(t(\theta))f_l(t(\theta))\sin(\pi\theta)}{2\sqrt{t(\theta)(1-t(\theta))}} \text{ d}\theta, \\
& = & \frac{\pi}{2}\int_{\Omega} \frac{f_k(t)f_l(t)}{\sqrt{t(1-t)}} \text{ d}t, \\
& = & \frac{\pi}{2}(\mathbf{M}_w)_{k,l},
\end{eqnarray*}
for $k,l=1,\ldots,M.$ Note that $t(\Omega) = \Omega,$ so the integral is taken over the same domain even after change of variables. 
\end{proof}

\subsection{Bernstein polynomial basis}\label{ssec:bern}

The approach in \cite{dokken_1997} was to choose $\bm\alpha$ to be a non-negative partition of unity basis such as the Bernstein basis (i.e., $\sum_i \alpha_i(t)\equiv 1$ and $0\leq\alpha_i(t)\leq 1$). This ensures that $\Vert \bm\alpha(t)\Vert_2\leq \Lambda(\bm\alpha) = 1$ for all $t\in\Omega$ and so the smallest singular value gives an upper bound for the error 
\[
\min_{\Vert\mathbf{b}\Vert_2=1}\max_{t\in\Omega}|q(\mathbf{p}(t))| \leq \sigma_{\min}. 
\]
Approximation in the Bernstein basis $(B_j)_{j=1}^L,$ has the advantage that it is easily generalizable to both tensor-product and simplex domains in higher dimensions \cite{barrowclough_2010}. If the parametric curves are given in spline or B\'ezier form, it is natural to use the Bernstein coefficients, since there exist numerically stable algorithms for computing the compositions, without resorting to sampling \cite{derose_1993}. Despite the slightly less favourable approximation qualities of the Bernstein basis (see Figure \ref{fig:maxerror}), this method performs sufficiently well to be integrated into CAGD systems that are based on B\'ezier or spline curves and surfaces. It also appears to be the most stable method if the degree $m$ is chosen high enough for an exact implicit representation (see Section \ref{sec:comparison}).

The Bernstein method is closely related to both the Lagrange and Legendre methods seen previously. It is in fact easy to see that the $\mathbf{D}_{\mathrm{B},L}$ matrix in the Bernstein basis of degree $L-1,$ converges asymptotically to the $\mathbf{D}_{\mathrm{L},L}$ matrix in the Lagrange basis of degree $L-1$ at uniform nodes, as the degree is raised. 

\begin{prop}\label{prop:bernstein_convergence}
Let $\mathbf{D}_{\mathrm{B},L}$ be the matrix defined by (\ref{eq:prob}) in the degree $L-1$ Bernstein polynomial basis. Then the $(j,k)$ element of $\mathbf{D}_{\mathrm{B},L}$ converges to the $(j,k)$ element of $\mathbf{D}_{\mathrm{L},L},$ defined by (\ref{eq:prob}) in the Lagrange polynomial basis at uniform nodes.
\end{prop}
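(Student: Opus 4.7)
The plan is to identify the entries of the two matrices with natural polynomial quantities associated with the composition $f := q_k\circ\mathbf{p},$ and then to invoke the classical convergence of the Bernstein control polygon under degree elevation. In the homogeneous representation used throughout the paper, $\mathbf{p}$ has polynomial components of degree $n,$ so $f$ is a polynomial of degree $mn;$ for every $L \geq mn+1$ it admits a unique degree-$(L-1)$ Bernstein expansion $f(t) = \sum_{j=1}^L d^{\mathrm{B}}_{j,k} B_{j,L-1}(t),$ whose coefficients form the $k$-th column of $\mathbf{D}_{\mathrm{B},L}.$ On the other hand, the uniform nodes $(j-1)/(L-1)$ from Proposition \ref{prop:lagrange_convergence} are exactly the Greville abscissae of this basis, so $(\mathbf{D}_{\mathrm{L},L})_{j,k} = f((j-1)/(L-1)).$ The proposition therefore amounts to the statement that, for every polynomial $f$ of degree $\leq mn,$
\[
d^{\mathrm{B}}_{j,k} - f\!\left(\tfrac{j-1}{L-1}\right) \longrightarrow 0 \qquad \text{as } L\to\infty.
\]

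By linearity of the Bernstein expansion it suffices to prove this when $f = t^r$ for each fixed $r = 0, 1, \ldots, mn.$ A short direct computation, equivalent to applying the degree-elevation formula $r$ times to $t^r,$ gives the degree-$(L-1)$ Bernstein coefficient of $t^r$ at index $j$ as the ratio $\binom{j-1}{r}/\binom{L-1}{r};$ expanding the falling factorials and comparing with $((j-1)/(L-1))^r$ shows that the two differ by $O(1/L)$ uniformly in $j.$ Summing the contributions from the fixed, $L$-independent monomial expansion of $f$ then yields the claimed elementwise convergence, in fact at rate $O(1/L).$

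The only real subtlety is bookkeeping: one must check that the indexing conventions for the Bernstein basis (columns of $\mathbf{D}_{\mathrm{B},L}$) and for the uniform sampling points (columns of $\mathbf{D}_{\mathrm{L},L}$) are aligned, so that the $j$-th Bernstein coefficient corresponds to the $j$-th Greville abscissa $(j-1)/(L-1).$ Beyond this the argument is a routine specialization of Bernstein's classical approximation result to the polynomial $q_k\circ\mathbf{p},$ and as a byproduct it gives a quantitative $O(1/L)$ rate that describes how quickly the Bernstein method of this subsection approaches the Lagrange-at-uniform-nodes method of the previous one.
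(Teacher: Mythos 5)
Your proposal is correct and rests on the same key fact as the paper's proof, namely that the degree-elevated Bernstein coefficients of the polynomial $q_k\circ\mathbf{p}$ converge to its values at the uniform nodes $(j-1)/(L-1)$; the paper simply cites this classical result, whereas you prove it directly by reducing to monomials $t^r$ via the identity $d^{\mathrm{B}}_{j}=\binom{j-1}{r}/\binom{L-1}{r}$ and comparing falling factorials. The self-contained argument is sound and has the small bonus of yielding the explicit $O(1/L)$ rate, which the paper's citation-based proof does not record.
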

\begin{proof}
It is well known that  the Bernstein coefficients of a polynomial tend to the values of the polynomial as the degree is raised, as follows \cite{floater_2000}:
\[
\lim_{L\rightarrow\infty} (\mathbf{D}_{\mathrm{B},L})_{j,k} = q_k\left(\mathbf{p}\left(t_j\right)\right), \text{ for all } j=1,\ldots,L,
\]
where $t_j = (j-1)/(L-1).$ But the elements on the right-hand side are simply the elements of $\mathbf{D}_{\mathrm{L},L}$ in the Lagrange basis at uniform nodes. 
\end{proof}
We can thus deduce the following convergence property of the Bernstein method as an immediate consequence of Propositions \ref{prop:lagrange_convergence} and \ref{prop:bernstein_convergence}:
\begin{cor}\label{cor:bern_con}
Let $\mathbf{D}_{\mathrm{B},L}$ be the matrix defined by (\ref{eq:prob}) in the Bernstein polynomial basis of degree $L-1.$ Then the $(k,l)$ element of ${\mathbf{D}_{\mathrm{B},L}}^T\mathbf{D}_{\mathrm{B},L}$ converges to a constant multiple of the $(k,l)$ element of $\mathbf{M}_1,$ defined by (\ref{eq:M}), as the degree is raised.
\end{cor}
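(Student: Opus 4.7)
The plan is to chain Propositions \ref{prop:lagrange_convergence} and \ref{prop:bernstein_convergence} in the natural way. Expanding the $(k,l)$ entry gives
\[
h_L \bigl(\mathbf{D}_{\mathrm{B},L}^T \mathbf{D}_{\mathrm{B},L}\bigr)_{k,l} = h_L \sum_{j=1}^L (\mathbf{D}_{\mathrm{B},L})_{j,k}\,(\mathbf{D}_{\mathrm{B},L})_{j,l},
\]
with $h_L = 1/(L-1)$. The goal is to show this quantity tends to $(\mathbf{M}_1)_{k,l}$ as $L\to\infty$; the absorbed factor $h_L$ does not affect the singular vectors and accounts for the "constant multiple" in the statement.

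The cleanest route is to insert $\pm$ the analogous Lagrange entries and split the limit into two pieces:
\[
h_L \bigl(\mathbf{D}_{\mathrm{B},L}^T \mathbf{D}_{\mathrm{B},L}\bigr)_{k,l} = h_L \bigl(\mathbf{D}_{\mathrm{L},L}^T \mathbf{D}_{\mathrm{L},L}\bigr)_{k,l} + h_L \sum_{j=1}^L \bigl[ (\mathbf{D}_{\mathrm{B},L})_{j,k}(\mathbf{D}_{\mathrm{B},L})_{j,l} - (\mathbf{D}_{\mathrm{L},L})_{j,k}(\mathbf{D}_{\mathrm{L},L})_{j,l} \bigr].
\]
Proposition \ref{prop:lagrange_convergence} handles the first term directly, yielding the desired limit $(\mathbf{M}_1)_{k,l}$. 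Proposition \ref{prop:bernstein_convergence} tells us that each summand in the second term tends to zero pointwise in $j$, since $(\mathbf{D}_{\mathrm{L},L})_{j,k} = q_k(\mathbf{p}(t_j))$ at the uniform nodes $t_j = (j-1)/(L-1)$.

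The main obstacle is the swap of the limit $L\to\infty$ with the sum $\sum_{j=1}^L$: both the range of summation and the index $j$ depend on $L$, so pointwise convergence of Bernstein coefficients to function values is not by itself enough. The standard way around this is to strengthen Proposition \ref{prop:bernstein_convergence} to uniform convergence in $j$, which is the classical uniform Bernstein approximation theorem applied to the bounded, piecewise continuous function $q_k\circ\mathbf{p}$ (with a small additional approximation-by-continuous-functions argument to accommodate jump discontinuities). Granting that, boundedness of $q_k\circ\mathbf{p}$ and $q_l\circ\mathbf{p}$ lets one factor the product differences and bound each summand uniformly by $C\varepsilon$; combined with $h_L L = L/(L-1) \to 1$, this forces the second term above to vanish. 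The corollary then follows at once, exactly as the phrase "immediate consequence" suggests.
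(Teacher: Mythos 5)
Your argument is correct, and in fact it fills in a subtlety the paper leaves implicit: the paper asserts this corollary as an ``immediate consequence'' of Propositions \ref{prop:lagrange_convergence} and \ref{prop:bernstein_convergence} without supplying a proof, and, as you observe, pointwise convergence of the Bernstein coefficients to the Lagrange values is not by itself enough to pass the limit through a sum whose length and sample nodes both move with $L.$ Upgrading to a uniform-in-$j$ estimate and splitting off the Lagrange contribution is exactly the right way to make the interchange legitimate, and your remainder bound of order $\frac{L}{L-1}\bigl(2C\varepsilon_L+\varepsilon_L^2\bigr)\to 0$ closes the argument.

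One simplification worth noting: in the Bernstein setting of Proposition \ref{prop:bernstein_convergence} and Corollary \ref{cor:bern_con}, each composition $q_k\circ\mathbf{p}$ is not merely bounded and piecewise continuous --- it is a polynomial of degree $mn,$ because the factorization (\ref{eq:fact}) in the Bernstein basis presupposes a polynomial (homogenized rational) curve $\mathbf{p}.$ The uniform estimate you need is therefore simply the classical uniform convergence, at rate $O(1/L),$ of the degree-elevated Bernstein coefficients of a fixed polynomial to its values at the uniform nodes, which is precisely what \cite{floater_2000} supplies. There are no jump discontinuities to mollify, so your auxiliary approximation-by-continuous-functions step can be dropped; the rest of your decomposition then goes through verbatim.
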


\subsection{Exact implicitization using linear algebra}\label{ssec:exact}

As mentioned previously, when the degree $m$ is chosen high enough to give an exact implicit representation and the algorithms are implemented in exact precision, all the methods can give exact results. The choice of basis in the exact case is irrelevant to the resulting polynomial and only affects only the implementation complexity and computational speed. For example, the elements of the matrix using the Lagrange method can be generated by choosing rational nodes represented in exact arithmetic. As with the floating point implementation, the matrix can be built very quickly in parallel, but rather than using SVD we can exploit algorithms for finding the kernel of a matrix. A similar method for exact implicitization is described in \cite{wang_2004}. However, the matrix there is expanded in the monomial basis, which leads to computationally expensive expansions for high degrees. It is noted that it is possible to reduce the complexity of that method in certain cases by exploiting the special structures of the algorithm and sparsity in the resulting matrix. In general, sparsity is not a feature of the Lagrange method, however, the matrices can be built more efficiently. The Bernstein method can also be implemented in exact arithmetic, however, similarly to the method in \cite{wang_2004}, it suffers from issues with computationally expensive expansions. Although it is possible to implement the Chebyshev and Legendre methods in exact precision using the elementwise definition (\ref{eq:orthelement}), the evaluation of such integrals will be slow. The fast algorithms for generating the matrices using FFT are not suitable for an exact implementation. 

When using the original method for approximate implicitization, we represent the error function $q\circ\mathbf{p}$ in a basis of degree $mn.$ In the Lagrange basis we thus choose the number of nodes $L,$ to be one more than the degree of the basis ${L=mn+1}.$ This is shown to be the smallest number of samples one can take in order to guarantee an exact implicitization method in the following proposition:

\begin{prop}\label{prop:samps}
Suppose we are given a non-degenerate rational parametric planar curve $\mathbf{p}(t)$ of degree $n$ (i.e., the degree of the algebraic representation is $n$). Then the number of unique samples required to guarantee an exact implicitization by the Lagrange method is given by $K=n^2+1.$
\end{prop}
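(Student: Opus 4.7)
The plan is to prove both directions of the equality: sufficiency (any $K = n^2 + 1$ distinct samples yield an exact implicitization) and necessity (there exist choices of $K = n^2$ samples that do not). The starting observation in both directions is that non-degeneracy of the parametrization forces the implicit equation to have degree exactly $n$. Setting $m = n$ in the algorithm, the composition $q \circ \mathbf{p}$ is therefore a univariate polynomial in $t$ of degree at most $mn = n^2$.

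For sufficiency, suppose $K = n^2 + 1$ distinct samples $t_1, \ldots, t_K$ are taken and let $\mathbf{b}$ lie in the kernel of $\mathbf{D}_{\mathrm{L}, K}$. By (\ref{eq:lagmat}), the polynomial $q \circ \mathbf{p}$ of degree at most $n^2$ vanishes at $n^2 + 1$ distinct points and therefore must be identically zero. Hence $q$ vanishes on the image of $\mathbf{p}$, and by non-degeneracy it must be a scalar multiple of the implicit polynomial. The kernel of $\mathbf{D}_{\mathrm{L}, K}$ is thus exactly the one-dimensional span of the implicit coefficient vector, and the smallest singular vector returned by the algorithm gives an exact implicitization.

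For necessity, the plan is to exhibit a bad configuration of $K = n^2$ samples using Bezout's theorem. Choosing any algebraic plane curve $C' = \{g = 0\}$ of degree $n$ distinct from the implicit curve $C$ of $\mathbf{p}$, a generic choice of $C'$ makes $C \cap C'$ consist of exactly $n^2$ distinct points. Taking the parameter values $t_1, \ldots, t_{n^2}$ that map to these intersection points as the samples, the coefficient vector of $g$ lies in the kernel of $\mathbf{D}_{\mathrm{L}, n^2}$ but is not a scalar multiple of the true implicit polynomial. Hence the kernel is strictly larger than one-dimensional, and the algorithm cannot be guaranteed to recover the correct implicit equation.

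The main obstacle is in the necessity direction: one must arrange the construction so that all $n^2$ intersection points lie in the image of $\mathbf{p}$ and correspond to $n^2$ distinct parameter values. This is routine when $\mathbf{p}$ covers the full algebraic curve (possibly by extending the parameter range beyond $[0,1]$), and follows from a standard genericity argument on the choice of $C'$. The sufficiency direction, by contrast, is a one-line application of the principle that a univariate polynomial cannot have more roots than its degree.
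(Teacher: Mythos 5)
Your sufficiency argument is correct and is essentially the paper's own: the paper phrases the key fact as ``Lagrange polynomials at $K$ distinct nodes form a basis for polynomials of degree $K-1$,'' which is the same as your observation that a polynomial of degree at most $n^2$ vanishing at $n^2+1$ distinct points is identically zero. Your extra remark that irreducibility forces the kernel to be exactly one-dimensional is a useful supplement that the paper only states informally after its proof.

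The necessity direction is where you diverge from the paper, and where there is a genuine gap. The paper does not use B\'ezout: it counts $K_1=\tfrac12(n+1)(n+2)-1$ (points in general position needed to determine a degree-$n$ curve) plus $K_2=\tfrac12(n-1)(n-2)$ (the maximal number of double points of a rational degree-$n$ curve), the point being that up to $K_2$ pairs of distinct parameter values can collapse to the same geometric point, so $n^2$ samples may impose too few independent conditions. Your B\'ezout construction is an attractive alternative --- it would even hand you an explicit second kernel vector --- but the step you dismiss as ``routine'' is precisely the hard part. B\'ezout gives $n^2$ distinct intersection points of $C$ and a generic $C'$ in $\mathbb{P}^2(\mathbb{C})$; nothing makes these points real, nothing places them on the real arc traced by $\mathbf{p}$, and nothing guarantees they pull back to $n^2$ \emph{distinct} parameter values. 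Reality is a closed condition, so it cannot be secured by a genericity argument, and extending the parameter interval does not help with intersection points that are simply not in the real image. Nor can you repair this by fixing $n^2$ distinct real samples first and then seeking $C'$ through the resulting points: the space of degree-$n$ forms vanishing at $n^2$ points has dimension $\binom{n+2}{2}-n^2\le 1$ for $n\ge 3$, so a second curve through them exists only in special Cayley--Bacharach-type configurations, not generically. To complete your route you must exhibit such a bad configuration explicitly --- for instance by exploiting a real double point of $C$, which is in effect what the paper does.
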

\begin{proof}
Since the implicitization is exact, we know that there exists a unique polynomial $q$ of degree $n$ with coefficients $\mathbf{b}$ such that $q\circ\mathbf{p}\equiv0.$ By the theory described in Section \ref{sec:orig}, we can write 
\[
q(\mathbf{p}(t)) = \sum_{j=1}^K \alpha_j(t) \sum_{k=1}^M d_{j,k} b_k \equiv 0,
\]
where $(\alpha_j)_{j=1}^K$ is a basis for polynomials of degree $K-1,$ and $K=n^2+1.$ Since the polynomials $(\alpha_j)_{j=1}^K$ are linearly independent, we have 
\[
\sum_{k=1}^M d_{j,k} b_k = 0,
\]
for $j=1,\ldots,K,$ and since $\mathbf{D}_\alpha\neq0,$ the vector $\mathbf{b}$ must lie in the null space of $\mathbf{D}_\alpha$. This shows that any basis of degree $K-1$ can be used for exact implicitization. In the univariate case, Lagrange polynomials defined by $K$ points form a basis for polynomials of degree up to $K-1,$ if and only if all the $K$ points are unique. Thus, choosing $K$ unique points in the parameter domain is sufficient to guarantee an exact implicitization. 

To see that choosing fewer than $K$ points is insufficient, we consider parameter values corresponding to double points on the curve. Let $K_1=\frac{1}{2}(n+1)(n+2)-1$ denote the minimum number of points in general position on a curve of degree $n$ required to define the curve. Let the total number of possible double points on a rational curve of degree $n$ be given by ${K_2=\frac{1}{2}(n-2)(n-1)}$ \cite{milnor_1968}. Then up to $K_2$ points on the curve can correspond to more than one parameter value. Thus the minimum number of samples guaranteeing a unique exact implicitization is given by 
\[
K_1+K_2 = K.
\]
\end{proof}

When searching for exact implicitizations, we generally want the implicit polynomial of \emph{lowest degree} that contains the parametric curve. Since the normalized coefficient vector $\mathbf{b},$ given by an exact implicitization of lowest degree is unique, the kernel of the matrix would be expected to be 1-dimensional. A kernel of dimension higher than one indicates that the implicit polynomial defined by any vector in the kernel is reducible, and thus the degree $m$ can be reduced.

\section{Comparison of the algorithms}\label{sec:comparison}

So far, we have presented several approaches to exact and approximate implicitization using linear algebra. The approaches exhibit different qualities in terms of approximation, conditioning and computational complexity. The intention of this section is to provide a comparison of the algorithms.

\subsection{Algebraic error comparisons}

\begin{figure}
\center
\includegraphics[scale=1.0]{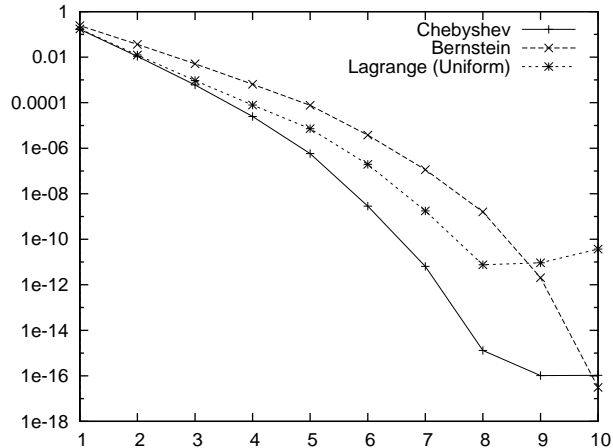}
\caption{The average uniform algebraic error, $\max{|q(\mathbf{p}(t))|},$ of the approximate implicitization of 100 B\'ezier curves of degree 10, with random control points in the triangle defined by $(1,0), (0,0)$ and $(0,1),$ with implicit approximation degrees between 1 and 10.}\label{fig:maxerror}
\end{figure}

Figure \ref{fig:maxerror} plots the average uniform algebraic error, $\max_{t\in[0,1]}|q(\mathbf{p}(t))|,$ of the approximations of 100 B\'ezier curves of degree 10, for algebraic degrees $m=1,\ldots,10.$ We use a barycentric coordinate system defined on the triangle $(1,0), (0,0)$ and $(0,1)$ for the implicit representation, and choose random control points within this region to define the B\'ezier curves. We compare the performance of the Lagrange basis (at uniform nodes)\footnote{Any reference to the Lagrange basis throughout this section will be assumed to be at uniformly spaced nodes.}, the Bernstein basis and the Chebyshev basis. As the results in the Chebyshev and Legendre bases are very similar, we include only the Chebyshev basis. The monomial basis, transformed to the interval $[0,1],$ was also considered but not included due to its vastly inferior approximation qualities. All the algorithms were performed in double precision.

For each degree up to the exact degree, $m=10,$ the Chebyshev basis gave the best uniform minimization of the objective function $q\circ\mathbf{p}.$ The maximum error for degree $m$ in the Chebyshev basis was, in general, approximately equivalent to the maximal error in the Lagrange basis of degree $m+1$ and the Bernstein basis of degree $m+2.$ For the Chebyshev basis, the maximal errors level out to roughly double-precision accuracy at degree eight, whereas for the Bernstein basis, the required degree for machine precision was 10. Although the Lagrange basis performs better than the Bernstein basis for low degrees, the higher degree approximations are distorted to the extent that the exact implicitization, at degree $10,$ loses several orders of magnitude in accuracy. This appears to be due to the large deviation in the extrema of Lagrange basis functions at uniform nodes, which are associated with large Lebesgue constants. An example of such an error distribution is pictured in Figure \ref{fig:lagerr}. The spike in error can be reduced by sampling more points, as the error converges to the weak approximation (c.f., Proposition \ref{prop:lagrange_convergence}), or by choosing point distributions with smaller Lebesgue coefficients. 

It should be noted that for the Bernstein and Lagrange methods, the maximum of the algebraic error normally occurs at the end points of the interval, and is normally much higher than the average error across the interval (see Figure \ref{fig:err}). Moreover, the error away from the ends of the interval can sometimes be smaller in these bases than in the Chebyshev basis. Hence, they generally perform better than Figure \ref{fig:maxerror} may suggest, in relation to the Chebyshev basis. The Chebyshev basis, however, tends to make the error roughly equioscillating throughout the interval. In addition, topological constraints imposed by approximating with lower degrees than necessary mean that even when the algebraic error is small, the geometric error can be somewhat different, especially for curves with many singularities.

\subsection{A visual comparison of the methods}\label{ssec:vis}

\begin{figure}
\begin{center}
\begin{tabular}{| c |}
\hline
\includegraphics[scale=0.21]{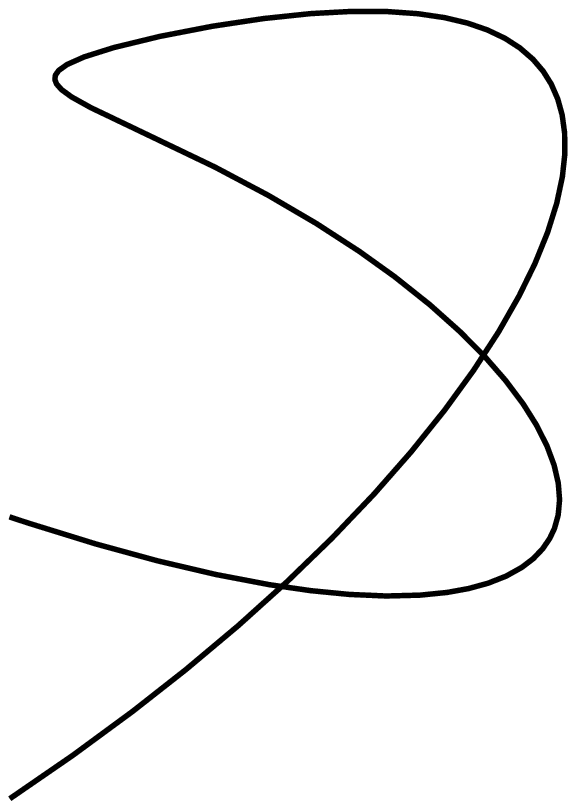} \\ \hline
\end{tabular}\caption{A parametric B\'ezier curve of degree seven, with control points $(\frac{1}{5},\frac{1}{10}),$ $(\frac{1}{2},\frac{3}{10}),$ $(\frac{1}{2},\frac{1}{2}),$ $(\frac{3}{10},\frac{1}{2}),$ $(0,0),$ $(0,\frac{4}{5}),$ $(\frac{4}{5},0)$ and $(\frac{1}{5},\frac{1}{5}).$ Implicit approximations of this curve appear in Figure \ref{fig:implicitizations}.}\label{fig:param}
\vspace*{1cm}
\begin{tabular}{ | l | c | c | c | c | }
\hline
 & Monomial & Bernstein & Lagrange & Chebyshev \\ \hline
$m=4$ & 
\includegraphics[scale=0.21]{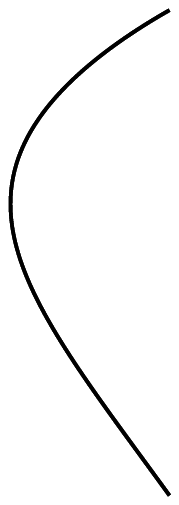} & 
\includegraphics[scale=0.21]{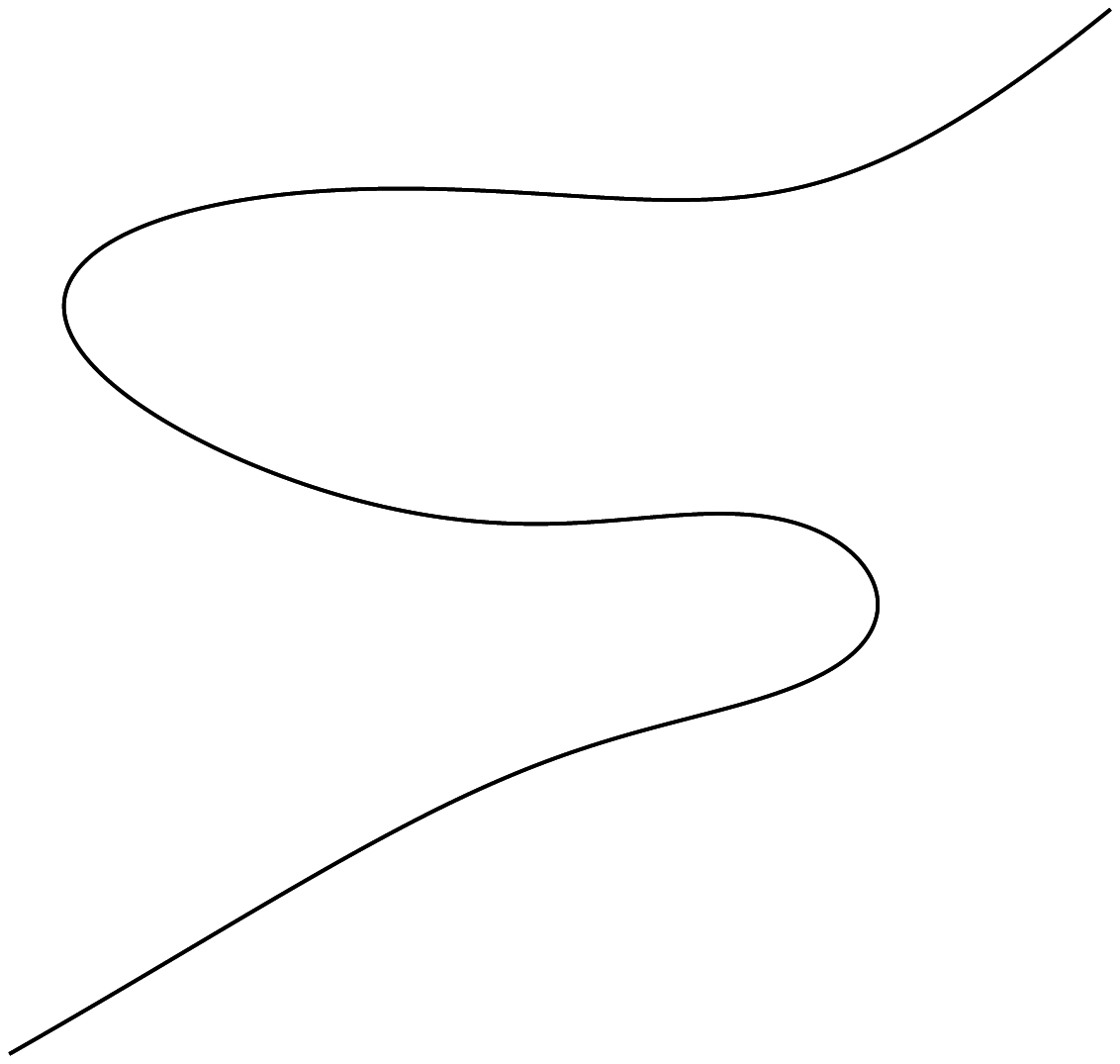} & 
\includegraphics[scale=0.21]{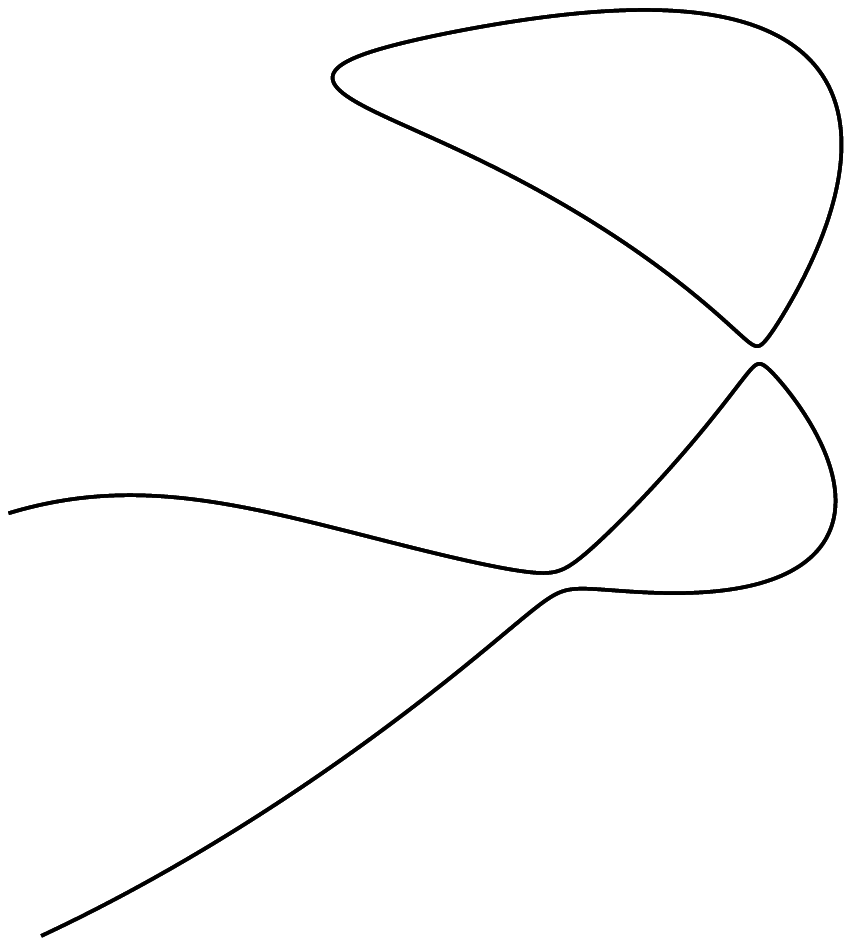} & 
\includegraphics[scale=0.21]{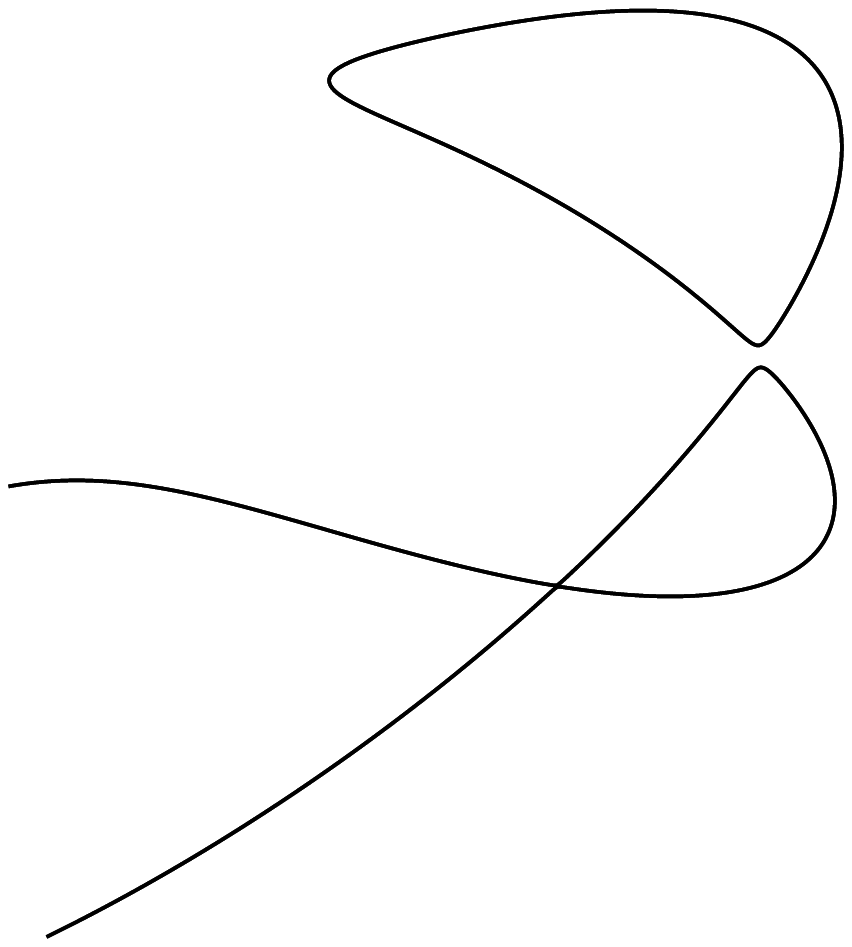} \\ \hline
$m=5$ & 
\includegraphics[scale=0.21]{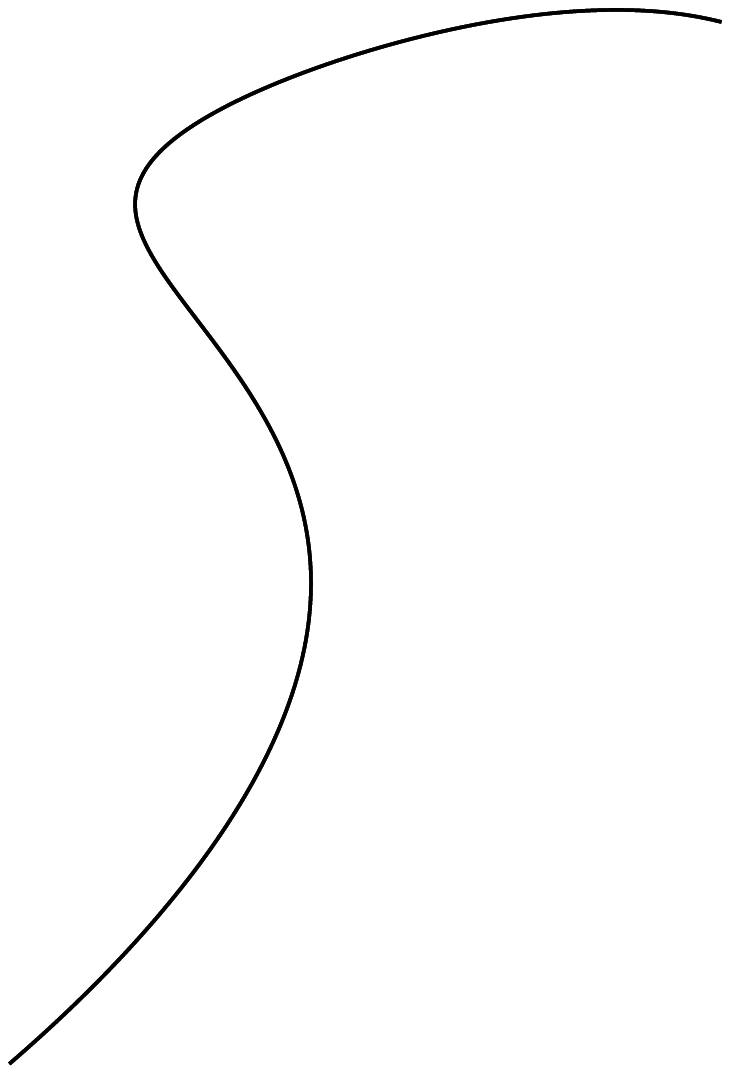} & 
\includegraphics[scale=0.21]{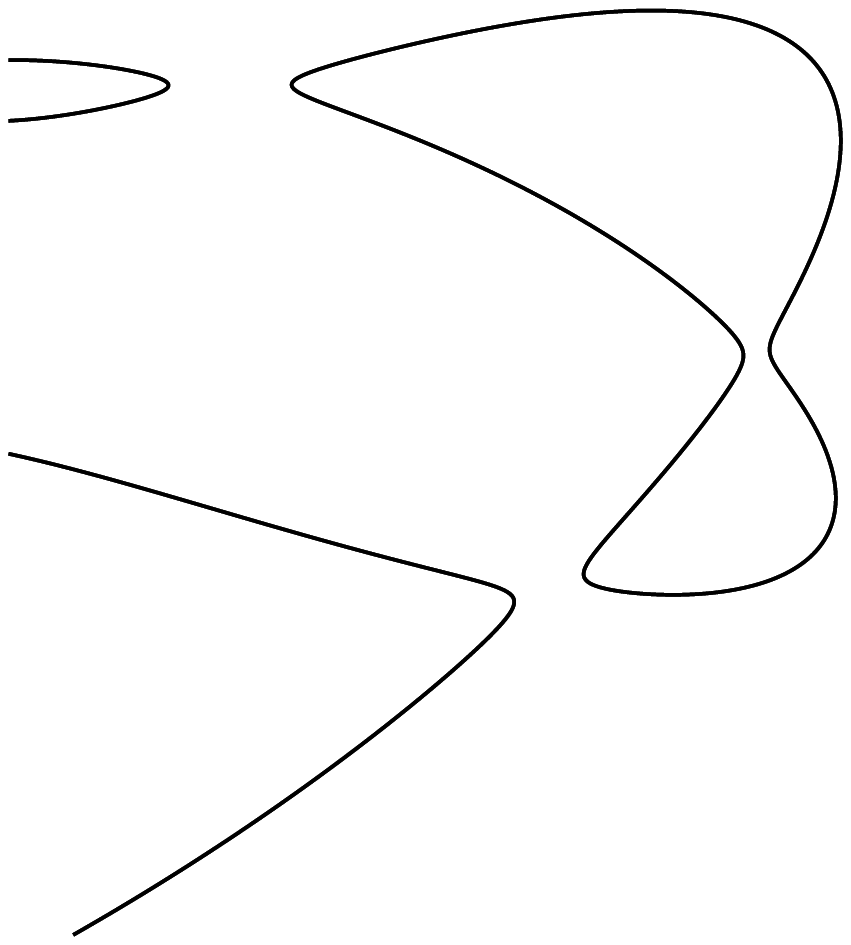} & 
\includegraphics[scale=0.21]{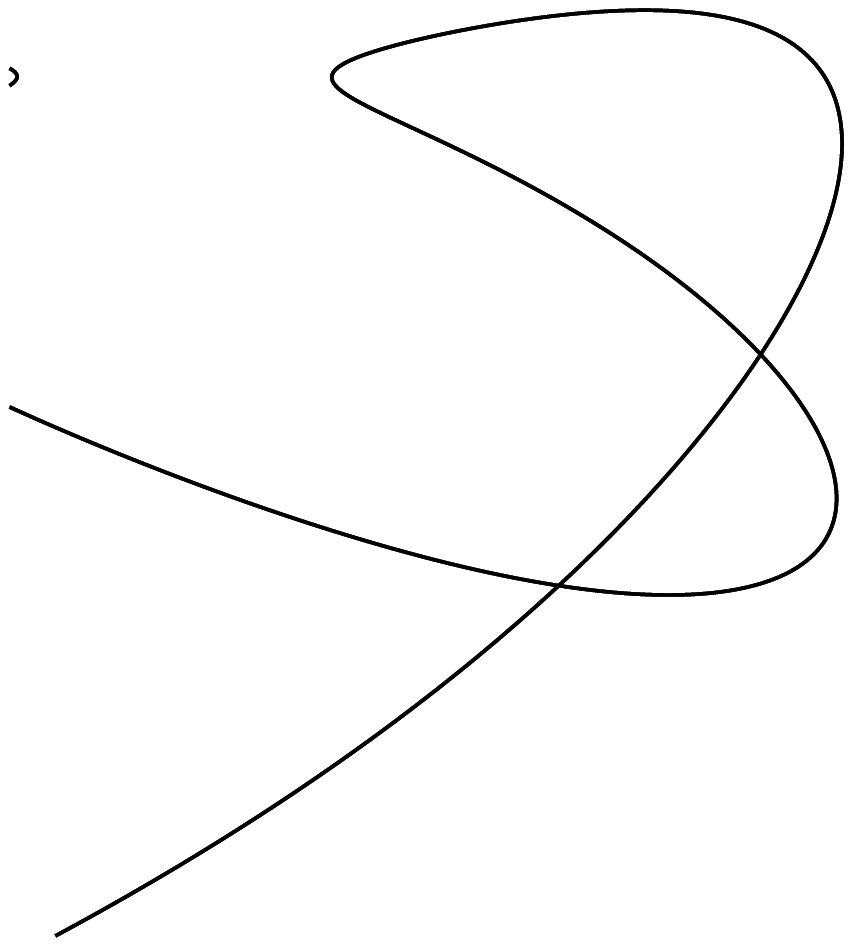} & 
\includegraphics[scale=0.21]{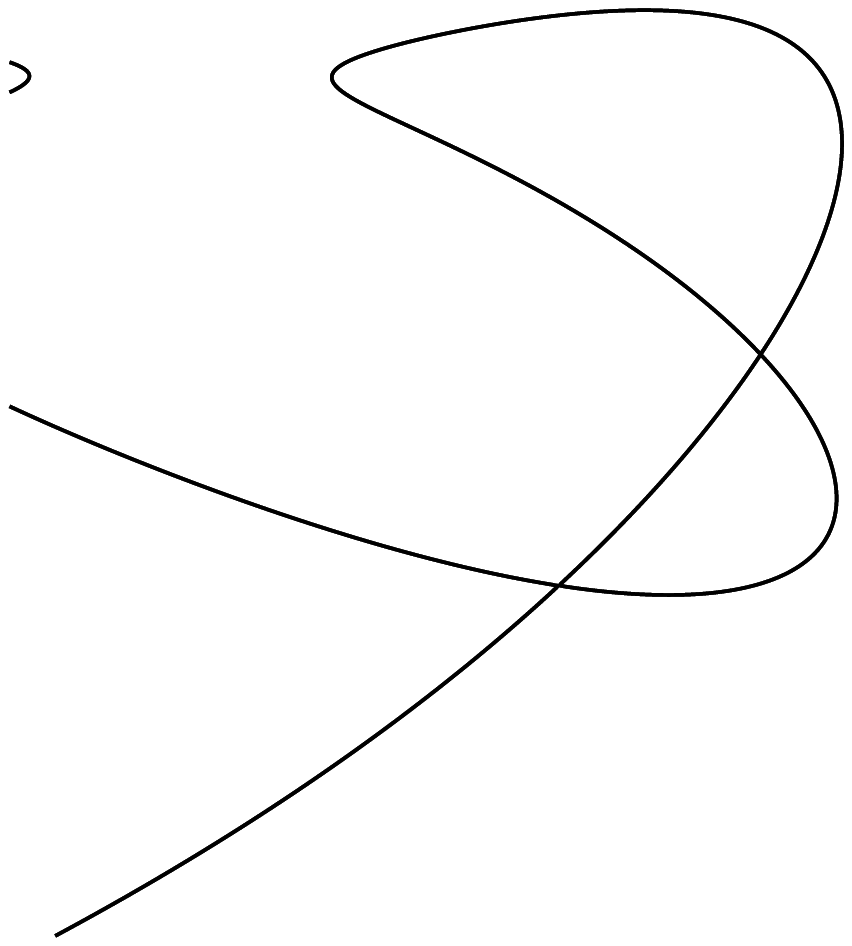} \\ \hline
$m=6$ &
\includegraphics[scale=0.21]{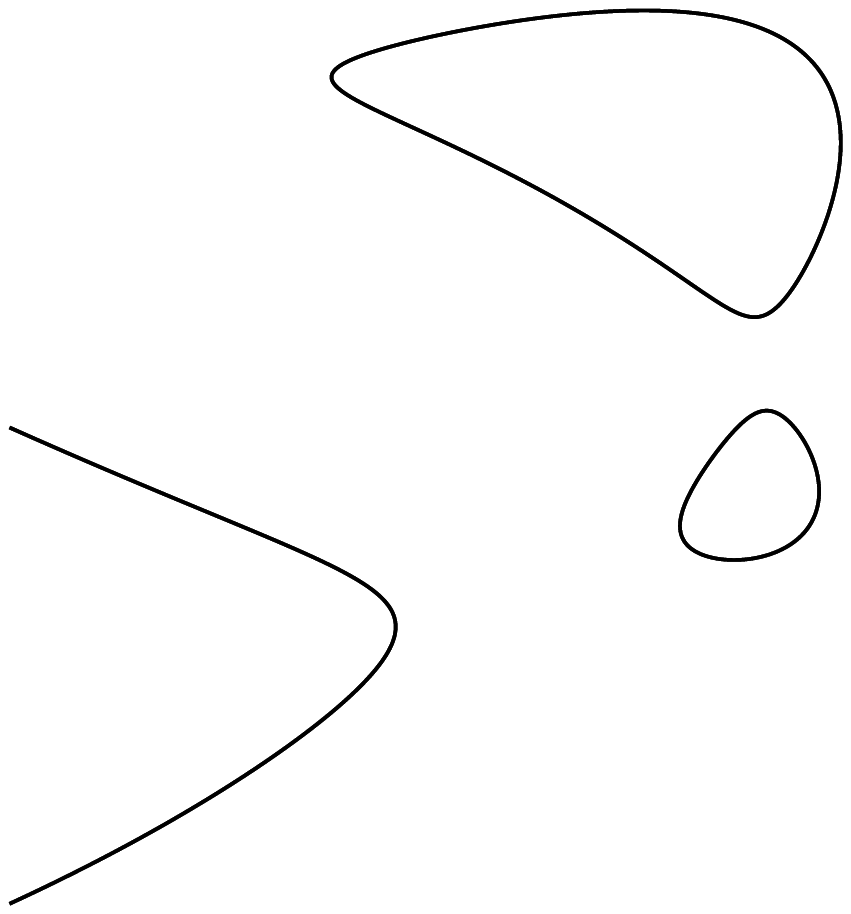} & 
\includegraphics[scale=0.21]{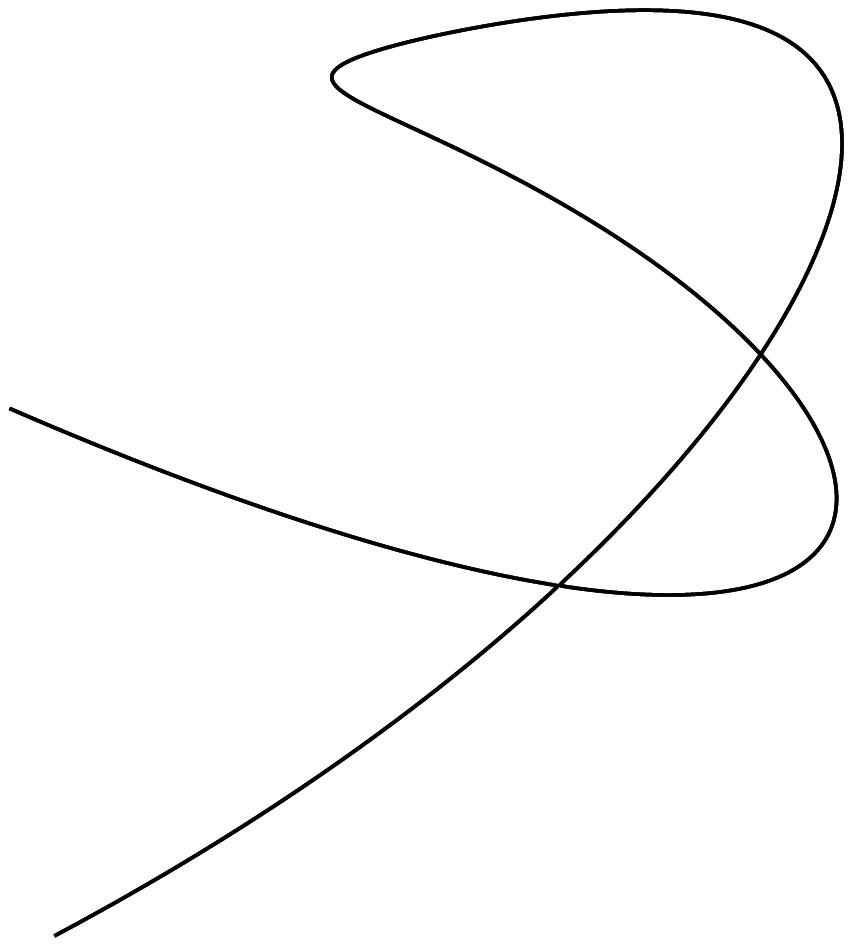} & 
\includegraphics[scale=0.21]{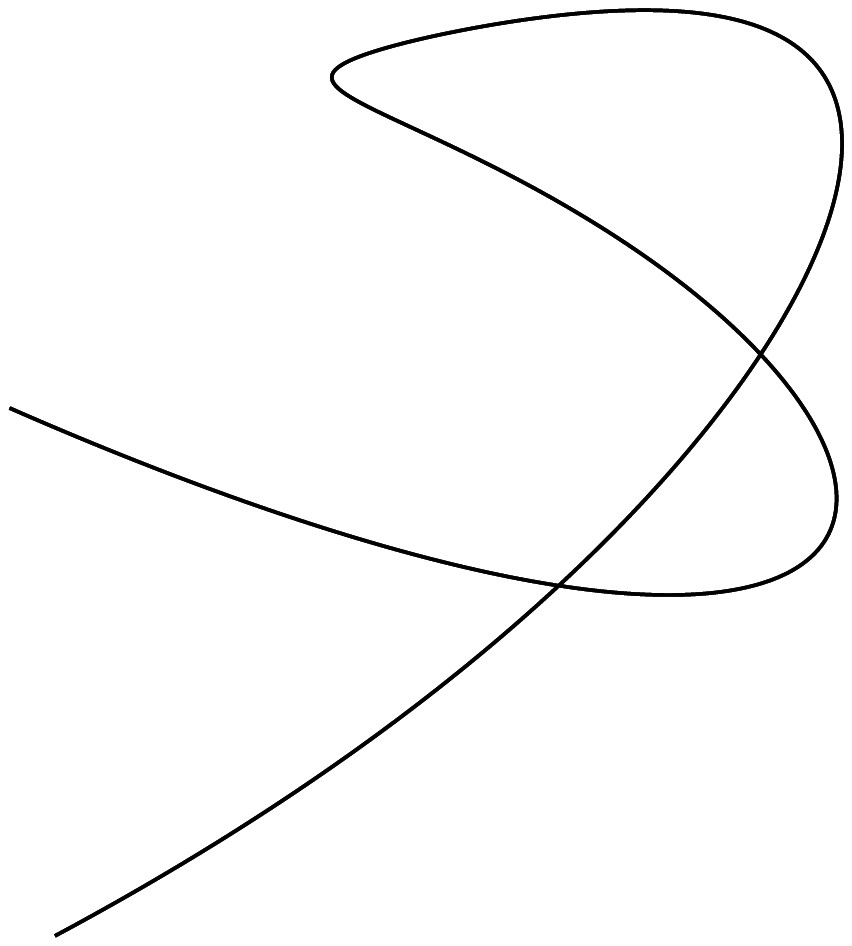} & 
\includegraphics[scale=0.21]{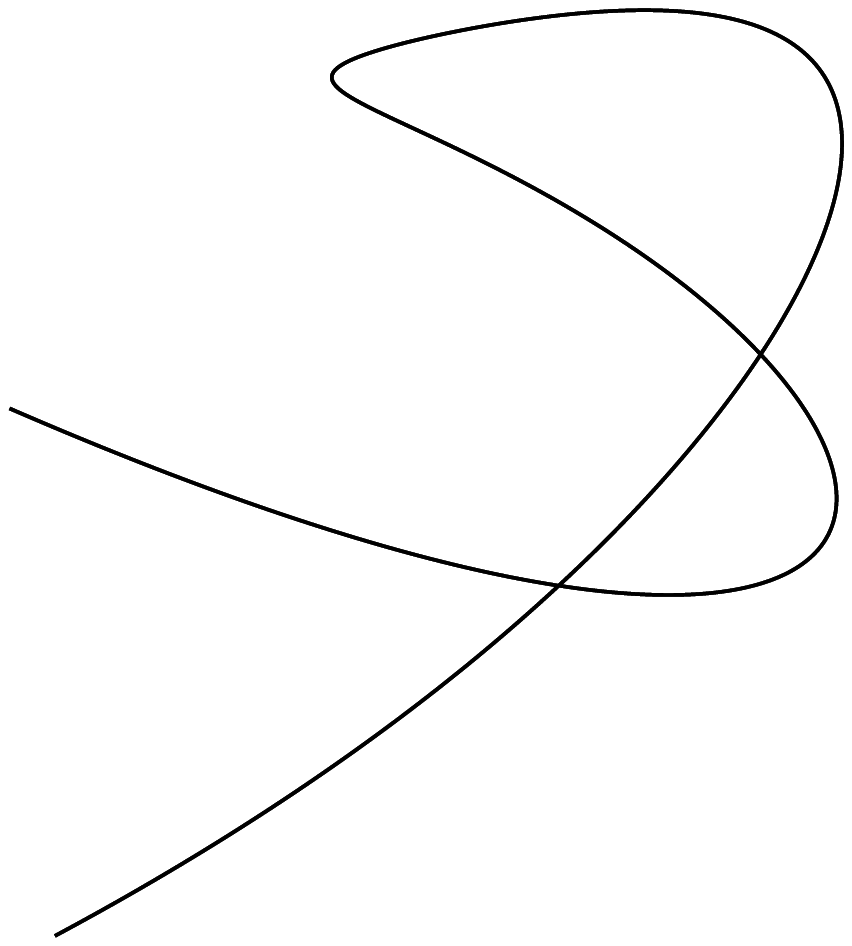} \\ \hline
$m=7$ &
\includegraphics[scale=0.21]{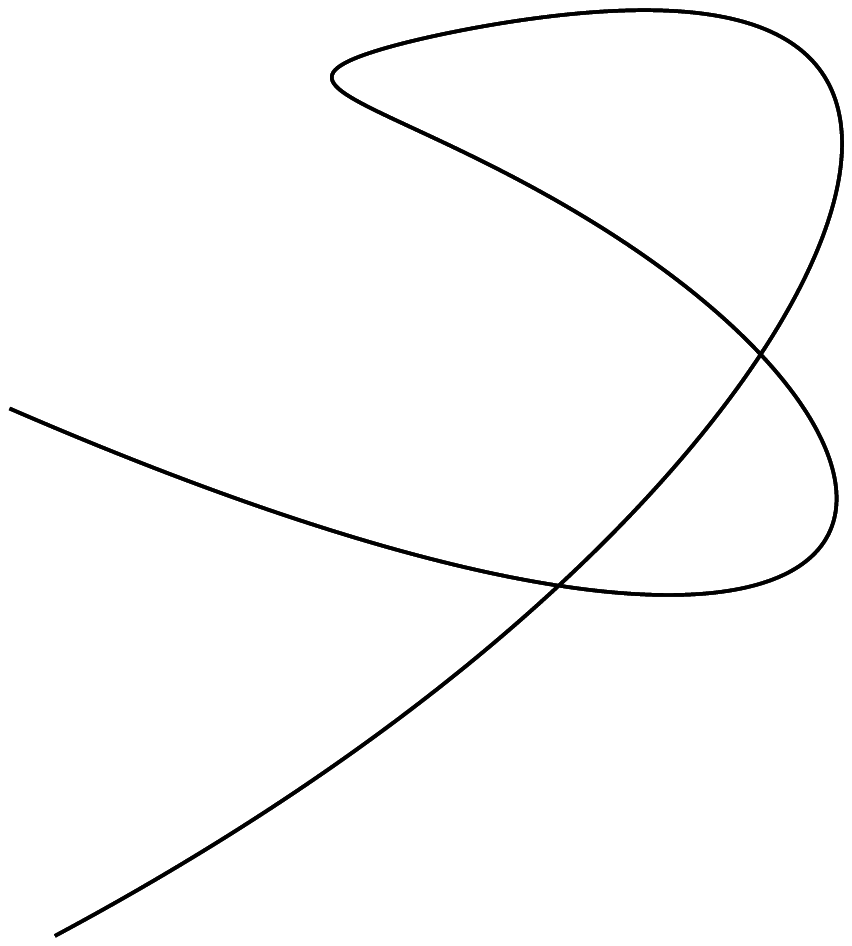} & 
\includegraphics[scale=0.21]{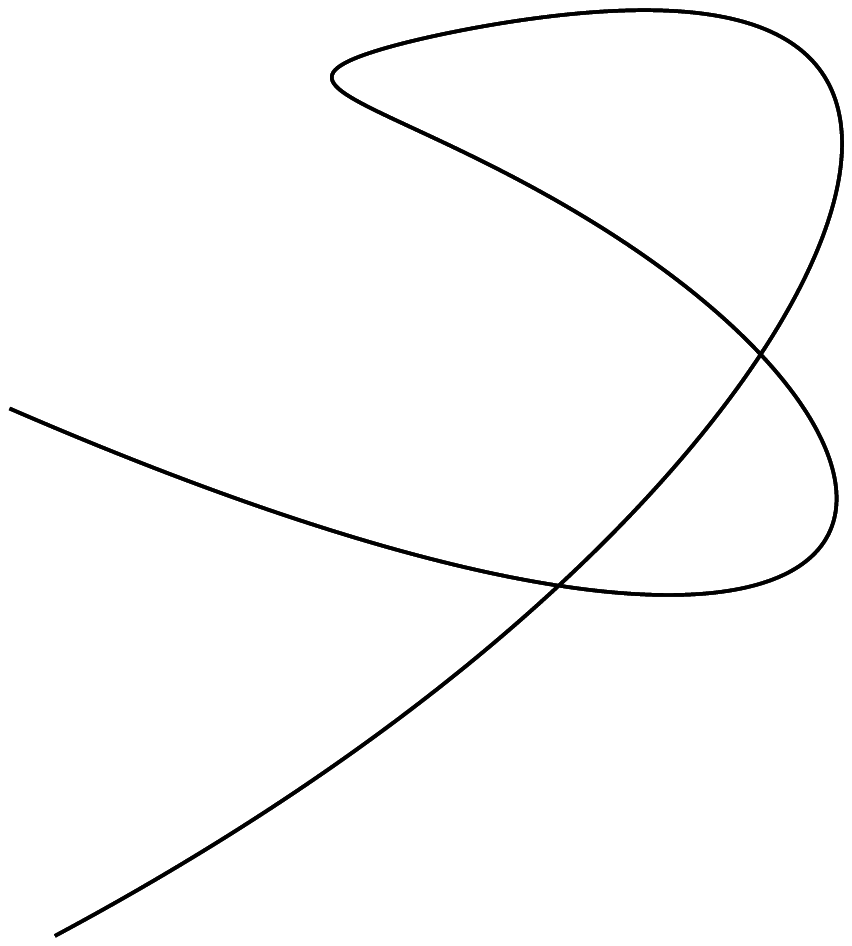} & 
\includegraphics[scale=0.21]{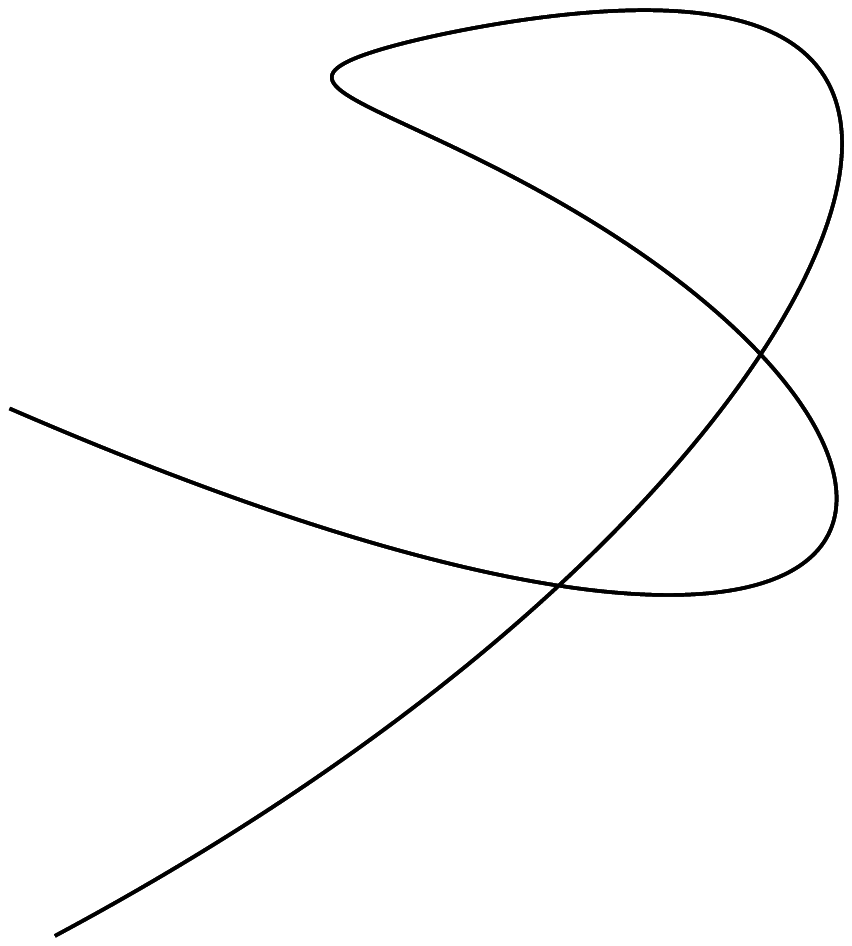} & 
\includegraphics[scale=0.21]{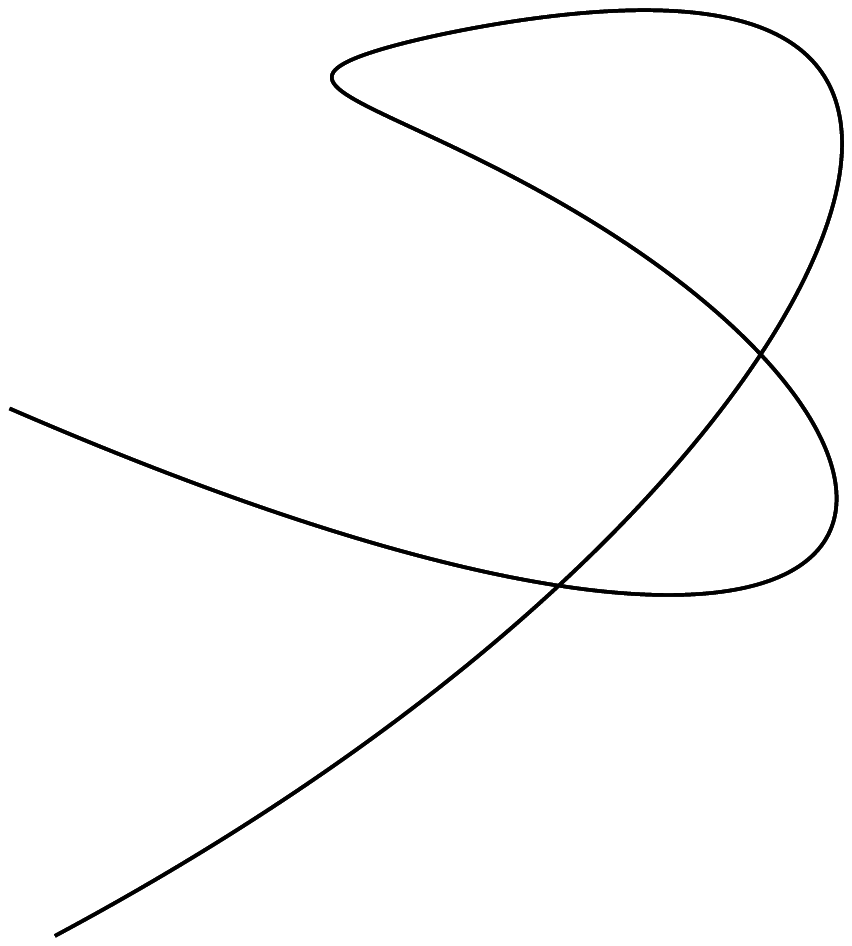} \\ \hline
\end{tabular}\caption{Implicit plots of the approximations of the degree seven B\'ezier curve pictured in Figure \ref{fig:param}, for implicit degrees $m=4,5,6$ and $7,$ in the monomial, Bernstein, Lagrange and Chebyshev bases.}\label{fig:implicitizations}
\end{center}
\end{figure}

As discussed previously, minimizing the algebraic error does not necessarily minimize the geometric distance between the implicit and parametric curves. In order to visually compare how the methods perform in terms of geometric approximations, Figure \ref{fig:implicitizations}  plots the implicit approximations of the parametric curve pictured in Figure \ref{fig:param}. The curve was chosen as an example that represents the general approximation properties of each of the bases. However, it should be noted that different examples can give different results.

We see that for the quartic approximation, the Lagrange and Chebyshev methods are already performing fairly well with only some detail lost close to the double point singularities. Despite exhibiting several intersections with the parametric curve, the Bernstein method gives little reproduction of the shape. The monomial approximation bears almost no resemblance to the original curve. For the quintic approximation, the Chebyshev and Lagrange bases again perform very similarly, giving excellent approximations that replicate the singularities well. These approximations would be sufficient for many applications. The Bernstein method performs similarly to the Chebyshev and Lagrange approximations of degree four, with only some loss of detail at each of the double points. Again the monomial basis gives almost no replication of the curve. It is also interesting to note the presence of extraneous branches visible in the Bernstein, Lagrange and Chebyshev approximations at degree five. This is a feature which may occur with any of the methods. At degree six the Bernstein, Lagrange and Chebyshev methods all give excellent results over the entire interval. The monomial method is beginning to show good approximation at the centre of the interval, however, this deteriorates towards the ends. At degree seven we expect exact results, up to numerical stability, for all of the algorithms. Visually, the implicitizations in all of the bases agree very closely. 

For degree seven, we can also perform the Lagrange method in exact precision as described in Section \ref{ssec:exact}. Using this method we obtain a null space of dimension one, which confirms that the correct algebraic degree is in fact seven. We may also use this exact implicitization to compare the relative errors $e_\alpha$ (using the infinity norm) for the implicitizations given by the different bases, as follows:
\begin{eqnarray*}
e_\text{mono} & = & 1.17\times10^{-8}, \\
e_\text{bern} & = & 7.46\times10^{-11}, \\
e_\text{lag} & = & 2.01\times10^{-4}, \\
e_\text{cheb} & = & 1.11\times10^{-5}.
\end{eqnarray*}
The numerical stability properties of the Bernstein basis are well documented in mathematical literature (see for example \cite{farouki_1996}). It appears that these properties are also preserved by the implicitization algorithm presented here, with the Bernstein basis outperforming the other bases to some orders of magnitude. In relation to the numerical stability of the methods, it is interesting to note the distribution of singular values given by the singular value decomposition of the $\mathbf{D}_{\alpha}$ matrices \cite{schicho_2005}. For this comparison we normalize the singular values to all lie in $[0,1].$ The Bernstein basis has one singular value close to zero in double precision; the second smallest being approximately $7.74\times10^{-9}.$ This shows that the solution is quite unique. The case is similar in the monomial basis, however, the second smallest singular value is approximately $6.24\times10^{-10}.$ For the Lagrange and Chebyshev bases the second smallest singular values are $2.54\times10^{-15}$ and $1.17\times10^{-14}$ respectively. Such close proximity between the smallest singular values leads to issues with numerical stability. However, it can also be useful since, as discussed previously, the singular vectors corresponding to the larger singular values are also candidates for approximation. Thus we would expect the second best approximation in the Chebyshev and Lagrange bases to be much better than the second best approximation in the Bernstein basis.

It is also interesting to note that when attempting to use the weak method for approximate implicitization as an exact method here, we obtain a completely different solution, with relative error given approximately by $e_\text{weak} = 0.607.$ This seems to be due to the fact that the nine smallest eigenvalues (which are equal to the singular values, since $\mathbf{M}$ is symmetric positive semi-definite), are all below machine precision. Thus, the solution given by the weak method could be a combination of any of the nine corresponding eigenvectors. However, despite the large relative error, the weak method still returns a solution which gives a reasonable geometric approximation.

Typical algebraic error distributions obtained from the methods in this section are displayed in Figure \ref{fig:err}. A more accurate approximation of the geometric error can be given by dividing the algebraic error by the norm of the gradient of the given implicit representation. However, since the methods we present do not control the gradient, we have not included such plots here. 

\begin{figure}
\center
\subfloat[Monomial]{\label{fig:monoerr}\includegraphics[scale=0.45]{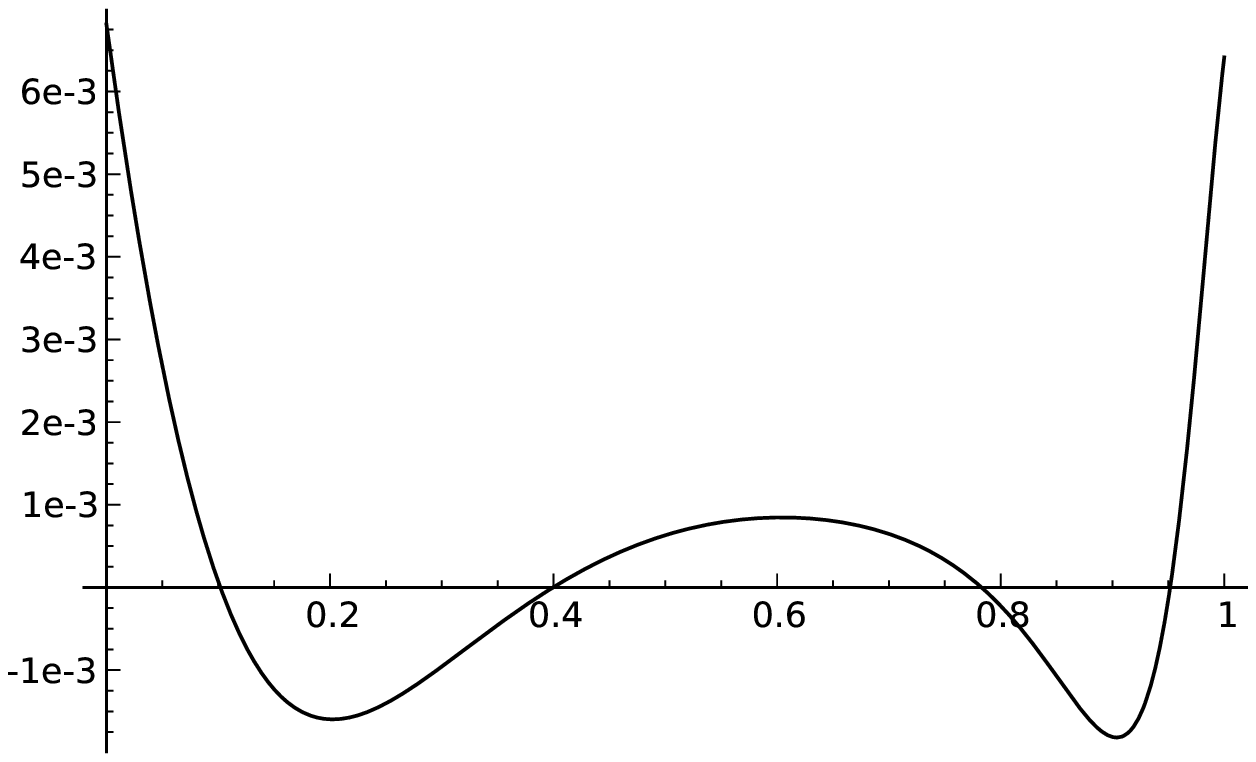}}
\subfloat[Bernstein]{\label{fig:bernerr}\includegraphics[scale=0.45]{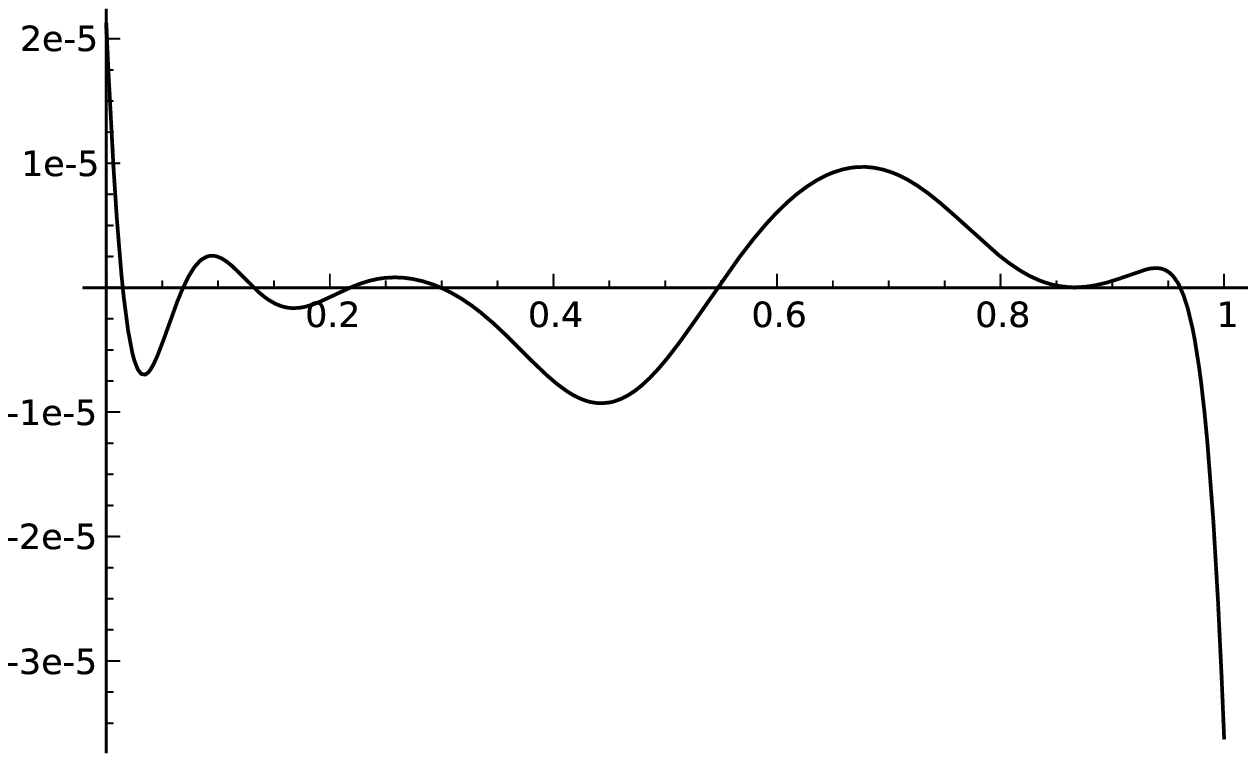}} \\
\subfloat[Lagrange]{\label{fig:lagerr}\includegraphics[scale=0.45]{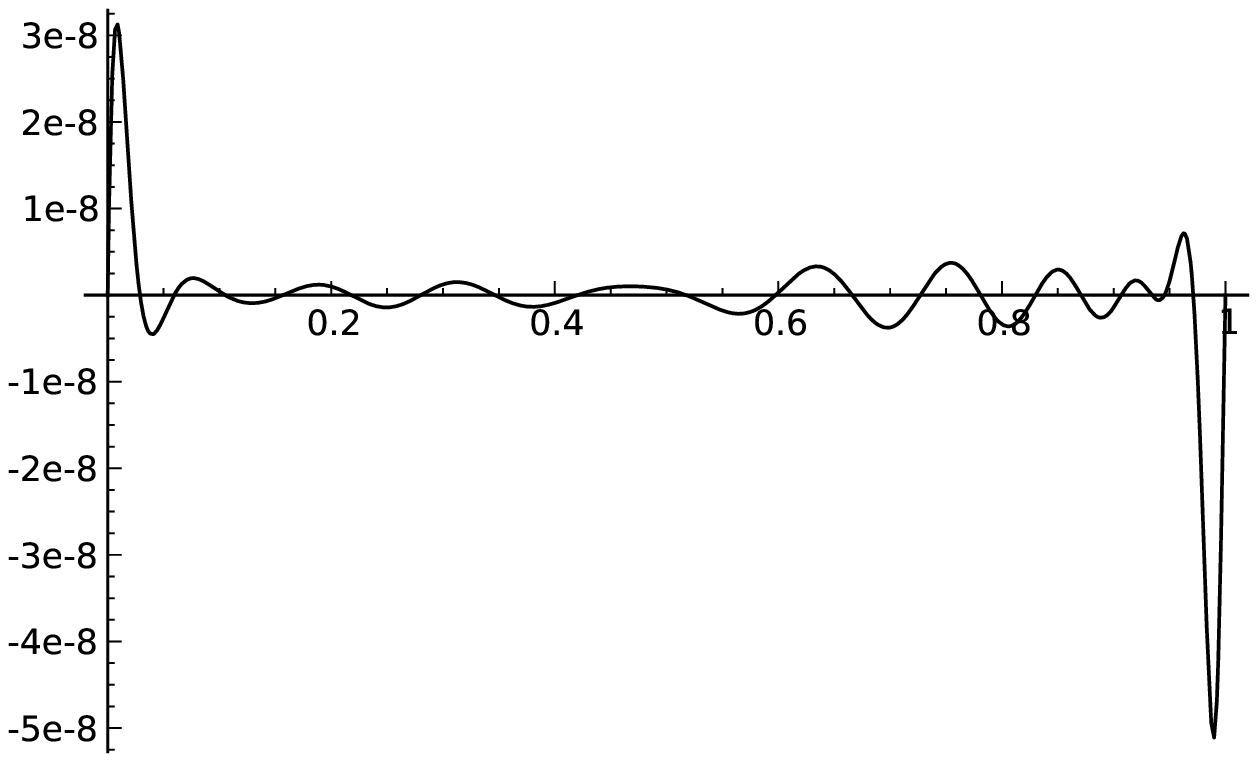}}
\subfloat[Chebyshev]{\label{fig:cheberr}\includegraphics[scale=0.45]{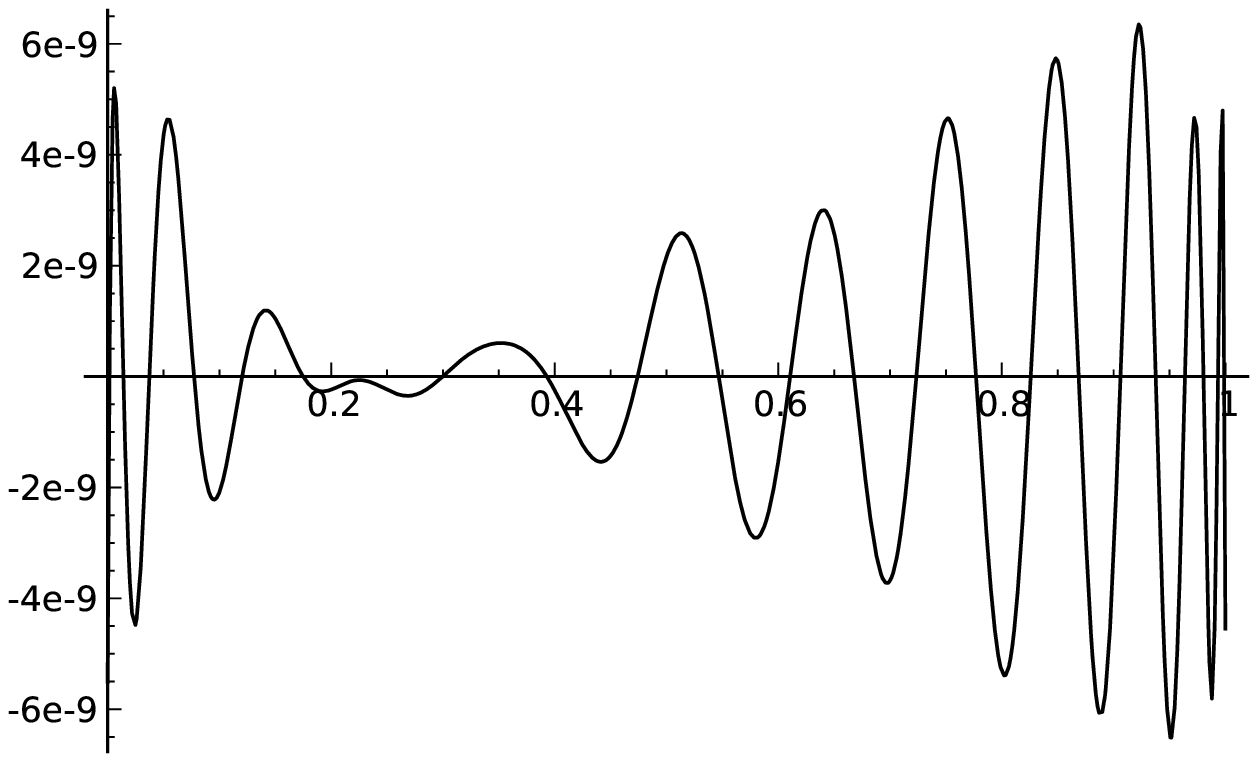}}
\caption{Typical algebraic error distributions $q(\mathbf{p}(t)),$ for the different bases. These are taken from the quintic implicit approximations of the parametric curve in Figure \ref{fig:param}.}\label{fig:err}
\end{figure}

\subsection{Discussion}

In the example of section \ref{ssec:vis}, it is striking how well the Lagrange basis performs in relation to the Chebyshev basis. Despite the spike in error near the ends of the interval, the geometric approximation appears to remain relatively good; and in some cases, even better than the Chebyshev. Thus, the comparisons in this section may lead to different conclusions as to which is the best algorithm to choose in general. It is clear that the monomial basis performs relatively badly, but the other bases all tend to perform well. The speed and simplicity of the Lagrange method is appealing, whereas the stability of exact implicitization in the Bernstein basis is desirable for many applications. The fact that the Legendre and Chebyshev methods have the guarantee of solving a least squares problem (see Theorem \ref{thm:DTD}), and that there exist efficient algorithms for computing them, also gives justification for choosing them as general methods. As an overview of this qualitative comparison, we display various aspects of the implementations in Table \ref{tab:qual}. The table summarizes how the algorithms perform in terms of the  stability, generality and what sort of problems they solve (i.e., least squares or uniform approximations).

\begin{table}
\begin{center}
  \begin{tabular}{ | l || c | c | c | c | c | c | c | }
    \hline
               & Least squares   & Uniform    & Stability & Generality     \\ \hline
    Lagrange   & Good            & OK         & OK        & Any            \\ \hline
    Legendre   & Very good       & Good       & OK        & Rational       \\ \hline
    Chebyshev  & Very good       & Very Good  & OK        & Rational       \\ \hline
    Bernstein  & OK              & OK         & Very Good & Rational       \\ \hline
    Weak       & Very good       & OK         & Very Bad  & Integrable     \\ \hline
  \end{tabular}\caption{A qualitative comparison of the algorithms. The least squares, and uniform columns refer to how well the algorithms perform in terms of producing such approximations in the algebraic error function.}\label{tab:qual}
\end{center}
\end{table}

One undesirable property of approximate implicitization is the possibility of introducing new singularities that are not present in the parametric curve. As the implicit polynomial representation is global, we cannot control what happens outside the interval of approximation. In particular, there could appear self-intersections of the curve within the interval of interest. This is an artifact that can appear using any of the methods described in this paper. However, such problems can be avoided by adding constraints to the algebraic approximation \cite{sederberg_1999}, or by using information about the gradient of the implicit curve in the approximation \cite{juettler_2002}. In general, adding such constraints will reduce the convergence rates of these methods \cite{dokken_2001}.

The computation times for each of the methods varies. In all the current implementations of the methods, the matrix generation is the dominant part of the algorithm, and the SVD is generally fast. When constructing the matrices, the monomial and Bernstein methods suffer from computationally expensive expansions for high degrees, whereas the Chebyshev and Lagrange methods are based on point sampling and FFT, which can be implemented in parallel. Computational features of the methods will be the subject of further research, including exploiting the parallelism of GPUs to enhance the algorithms.

\section{Approximate implicitization of surfaces}\label{sec:surfaces}

In this section we will discuss how the methods presented for curves in the preceding sections generalize to surface implicitization. We will also provide a visual example of approximate implicitization of surfaces.

A parametric surface in $\mathbb{R}^3$ is given by $\mathbf{p}(s,t) = (p_1(s,t),p_2(s,t),p_3(s,t))$ where $p_1, p_2$ and $p_3$ are functions in parameters $(s,t)\in\Omega\subset\mathbb{R}^2.$  Similarly to curves, in $\mathbb{P}^3,$ we use the homogeneous form of such a surface to write 
\[
\mathbf{p}(s,t) = (g_1(s,t),g_2(s,t),g_3(s,t),h(s,t)),
\]
for bivariate polynomials $g_1, g_2, g_3$ and $h.$ 

Although we have the option of using tensor-product polynomials for the implicit representation, here we choose polynomials of total degree $m.$ An implicit polynomial $q$ of total degree $m$ can be described in a basis $(q_k(\mathbf{u}))_{k=1}^M,$ where $M=\binom{m+3}{3},$ with coefficients $\mathbf{b} = (b_k)_{k=1}^M.$ The choice of using the Bernstein basis for the implicit polynomial can be extended by considering a barycentric coordinate system defined over a tetrahedron containing the parametric surface. For surfaces in $\mathbb{P}^3,$ the homogeneous Bernstein basis is given by
\[
q_{\mathbf{k}}(u,v,w,z) = \binom{m}{\mathbf{k}} u^{k_1} v^{k_2} w^{k_3} z^{k_4}, \quad \text{ for } |\mathbf{k}| = k_1+k_2+k_3+k_4 = m,
\]
where $u,v,w$ and $z$ denote the homogeneous coordinates, and $\mathbf{k}=(k_1,k_2,k_3,k_4)$ denotes a multi-index. Again, this basis forms a partition of unity and we order it by letting $q_k$ correspond to $q_{\mathbf{k}},$ for $k=1,\ldots,M,$ where $k = k(\mathbf{k})$ denotes lexicographical ordering.

When applying the original algorithm for approximate implicitization, we observe that the expression $q\circ\mathbf{p}$ is a bivariate polynomial in $s$ and $t.$ As such, we can write the function in a basis $\bm\alpha(s,t)=(\alpha_j(s,t))_{j=1}^L$ as
\begin{equation}\label{eq:biimp}
q(\mathbf{p}(s,t)) = \bm\alpha(s,t)^T\mathbf{D}_\alpha\mathbf{b}.
\end{equation}
The description of $\bm\alpha(s,t)$ and the number of basis functions $L$ is dependent on the type of parametric surface. We thus make distinctions for the two most interesting cases - tensor-product surfaces, and surfaces on triangular domains - in the following subsections.

The weak method presented in Section \ref{sec:weak} can also be generalized to surfaces. The problem can be stated as 
\begin{equation}\label{eq:lsprob}
\min_{\Vert\mathbf{b}\Vert_2 = 1} \int_{\Omega} w(s,t) q(\mathbf{p}(s,t))^2 \text{ d}s\text{ d}t,
\end{equation}
where $w(s,t)$ is some weight function defined on the domain of approximation $\Omega.$ In this way, we obtain a linear algebra problem as before, where the solution is given by the eigenvector corresponding to the smallest eigenvalue of a matrix $\mathbf{M}_w=(m_{k,l})_{k=1,l=1}^{M,M},$ defined by 
\begin{equation}\label{eq:Msurf}
m_{k,l} =  \int_\Omega w(s,t) q_k(\mathbf{p}(s,t)) q_l(\mathbf{p}(s,t)) \text{ d}s \text{ d}t.
\end{equation}

\subsection{Tensor-product parametric surfaces}

For rational tensor-product surfaces of bidegree $\mathbf{n}=(n_1,n_2),$ we can write, 
\[
\mathbf{p}(s,t) = \sum_{i_1=0}^{n_1}\sum_{i_2=0}^{n_2} \mathbf{c}_{i_1,i_2} \phi_{i_1}(s) \psi_{i_2}(t), 
\]
where $(\phi_{i_1})_{i_1=0}^{n_1}$ and $(\psi_{i_2})_{i_2=0}^{n_2}$ are bases for univariate polynomials of respective degree $n_1$ and $n_2,$ the domain $\Omega=[a,b]\times[c,d]$ is the Cartesian product of two univariate intervals, and $\mathbf{c}_{i_1,i_2}\in\mathbb{P}^3$ for $i_1=0,\ldots,n_1$ and $i_2=0,\ldots,n_2.$ In this case, equation (\ref{eq:biimp}) can be written in a tensor-product basis $\bm\alpha(s,t) = (\alpha_{j_1}(s)\beta_{j_2}(t))_{j_1=1,j_2=1}^{L_1,L_2}$ of bidegree $m\mathbf{n} = (mn_1,mn_2)$ as follows:
\[
q(\mathbf{p}(s,t)) = \sum_{k=1}^M b_k \sum_{j_1=1}^{L_1}\sum_{j_2=1}^{L_2} \hat{d}_{(j_1,j_2),k} \alpha_{j_1}(s)\beta_{j_2}(t),
\]
where $L_1=mn_1+1$ and $L_2=mn_2+1.$ We must use an ordering for the indices $(j_1,j_2)$ in order to enter the coefficients $\hat{d}_{(j_1,j_2),k}$ in matrix form. Again we choose the lexicographical ordering $j=j(j_1,j_2) = (j_1-1)L_2+j_2,$ so that
\begin{equation}\label{eq:factsurf}
\mathbf{D}_{\alpha,\beta} = (d_{j,k})_{j=1,k=1}^{L,M},
\end{equation}
where $L = L_1L_2 = m^2n_1n_2+mn_1+mn_2+1$ and $d_{j,k} = \hat{d}_{(j_1,j_2),k}.$ The algorithm then proceeds as for curves by using the singular value decomposition and selecting the vector corresponding to the smallest singular value. 

The univariate bases $\bm\alpha=(\alpha_{j_1})_{{j_1}=1}^{L_1}$ and $\bm\beta=(\beta_{j_2})_{j_2=1}^{L_2}$ can be be chosen arbitrarily, as in the case for curves. In this way, Theorem \ref{thm:DTD}, Propositions \ref{prop:lagrange_convergence}, \ref{prop:cheb_lagrange_convergence}, \ref{prop:bernstein_convergence}, \ref{prop:samps} and Corollary \ref{cor:bern_con} from the univariate case, all carry over to the tensor-product case with minimal effort\footnote{For a tensor-product extension of Proposition \ref{prop:samps}, the samples should be taken on a tensor-product grid and the number of samples is given by $K=(2n_1n_2^2+1)(2n_1^2n_2+1)$.}. This applies also to higher dimensional tensor-product hypersurfaces. As an example, consider Theorem \ref{thm:DTD}. In the tensor-product case, we have the following, almost verbatim to the univariate case:
\begin{thm}
Let $\bm\alpha$ be a tensor-product polynomial basis, orthonormal with respect to the given inner product $\langle\cdot,\cdot\rangle_w,$ and $\mathbf{D_{\alpha,\beta}}$ and $\mathbf{M}_w$ be defined by (\ref{eq:factsurf}) and (\ref{eq:Msurf}) respectively. Then $\mathbf{M}_w = \mathbf{D}^T_{\alpha,\beta}\mathbf{D_{\alpha,\beta}}.$
\end{thm}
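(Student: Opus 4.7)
The plan is to mirror the proof of Theorem \ref{thm:DTD} essentially verbatim, with the only difference being that the role of $\bm\alpha(t)$ is played by the tensor-product basis $\bm\alpha(s,t) = (\alpha_{j_1}(s)\beta_{j_2}(t))_{j_1=1,j_2=1}^{L_1,L_2}$. The starting point is the bivariate factorization (\ref{eq:biimp}), $q(\mathbf{p}(s,t)) = \bm\alpha(s,t)^T\mathbf{D}_{\alpha,\beta}\mathbf{b}$, which is structurally identical to (\ref{eq:fact}) from the univariate case.

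First I would square the scalar $q(\mathbf{p}(s,t))$ using the factorization to obtain
\[
q(\mathbf{p}(s,t))^2 = \mathbf{b}^T \mathbf{D}^T_{\alpha,\beta} \, \bm\alpha(s,t)\bm\alpha(s,t)^T \, \mathbf{D}_{\alpha,\beta}\mathbf{b}.
\]
Integrating against $w(s,t)$ over $\Omega$ and pulling the constant matrices outside the integral yields $\mathbf{b}^T \mathbf{D}^T_{\alpha,\beta} \mathbf{A} \mathbf{D}_{\alpha,\beta} \mathbf{b}$, which is the tensor-product analogue of (\ref{eq:dad}). Here $\mathbf{A}$ is the $L\times L$ Gramian matrix of the tensor-product basis under $\langle\cdot,\cdot\rangle_w$, whose $(j,j')$ entry under the chosen lexicographical ordering is $\langle \alpha_{j_1}\beta_{j_2},\, \alpha_{j'_1}\beta_{j'_2}\rangle_w$.

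Next I would invoke the hypothesis that the tensor-product basis is orthonormal with respect to $\langle\cdot,\cdot\rangle_w$, which immediately gives $\mathbf{A} = \mathbf{I}$. Combining with the definition of $\mathbf{M}_w$ from (\ref{eq:Msurf}) yields
\[
\mathbf{b}^T \mathbf{M}_w \mathbf{b} \;=\; \mathbf{b}^T \mathbf{D}^T_{\alpha,\beta} \mathbf{D}_{\alpha,\beta} \mathbf{b}
\]
for every $\mathbf{b}\in\mathbb{R}^M$. Since both $\mathbf{M}_w$ and $\mathbf{D}^T_{\alpha,\beta}\mathbf{D}_{\alpha,\beta}$ are symmetric, the polarization identity (or the fact that the associated quadratic forms agree everywhere) forces equality of the matrices, establishing the claim.

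There is essentially no obstacle here, since the univariate argument transfers mechanically once the tensor-product indexing is fixed. The only point that deserves care is the bookkeeping: confirming that the lexicographical ordering used to flatten the double index $(j_1,j_2)$ into a single index $j$ is consistent between the rows of $\mathbf{D}_{\alpha,\beta}$ in (\ref{eq:factsurf}) and the rows/columns of $\mathbf{A}$. Note also that the weight $w(s,t)$ need not be separable in $s$ and $t$; orthonormality of the full tensor-product basis under $\langle\cdot,\cdot\rangle_w$ is the only hypothesis required, exactly as in the univariate statement.
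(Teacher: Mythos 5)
Your proposal is correct and follows essentially the same route as the paper, which simply says the proof is ``similar to Theorem \ref{thm:DTD}'' with the bivariate weighted inner product in place of the univariate one; your explicit derivation of the tensor-product analogue of (\ref{eq:dad}) and the polarization step just spell out details the paper leaves implicit. No gaps.
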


\begin{proof}
Similar to Theorem \ref{thm:DTD}, with the relevant inner product given by 
\[
\langle f,g \rangle_w = \int_\Omega w(s,t) f(s,t) g(s,t) \text{ d}s \text{ d}t,
\]
for real bivariate polynomials $f,g.$ 
\end{proof}

\subsection{Triangular surfaces}

For rational surfaces of total degree $n$ on a triangular domain $\Omega,$ we can write, 
\[
\mathbf{p}(s,t) = \sum_{i=1}^{N} \mathbf{c}_{i} \phi_i(s,t), 
\]
where $(\phi_i)_{i=1}^{N}$ is a basis for bivariate polynomials of total degree $n$ with $N=\binom{n+2}{2},$ and $\mathbf{c}_i\in\mathbb{P}^3$ for $i=1,\ldots,N.$ In this case, equation (\ref{eq:biimp}) can be written in a bivariate basis $\bm\alpha=(\alpha_j)_{j=1}^L$ of total degree $mn$ as follows:
\[
q(\mathbf{p}(s,t)) = \sum_{k=1}^M b_k \sum_{j=1}^{L} d_{j,k} \alpha_j(s,t),
\]
where, $L=\binom{mn+2}{2}.$ Lexicographical ordering on the respective degrees of $s$ and $t$ in the basis $\bm\alpha$ can be used to enter the coefficients in matrix form, $\mathbf{D}_{\alpha} = (d_{j,k})_{j,k=1,1}^{L,M}.$ 

Surfaces on triangular domains may be considered a more fundamental generalization than tensor-product surfaces, however, they often exhibit several difficulties not present in the tensor-product case. For example, most practical applications of the weak method of Section \ref{sec:weak} use numerical quadrature, a method which is difficult to implement on triangular domains for high degrees. Since the degree of the integrand in the weak implicitization of a surface of degree $n$ has degree $2mn,$ high degree quadrature rules are required. For example, exact implicitization of a general quintic parametric surface, with implicit degree $m=n^2=25$ would need a quadrature rule accurate to order 250. Although it would be wise to use a lower degree approximation in this case (e.g., $m=5$), the degree of the integrand would still be too high for many quadrature rules on triangular domains. High degree quadrature rules can be constructed by first transforming the domain into a square, and using two univariate quadrature rules. However, these rules are not symmetric in the triangle and require more samples than is mathematically necessary \cite{lyness_1994}.

Certain methods for approximate implicitization are, however, easy to generalize. For example, the Bernstein basis has a natural representation on simplex domains using barycentric coordinates, and thus the use of approximate implicitization on triangular surfaces in this basis is simple \cite{barrowclough_2010}. The convergence result of Corollary \ref{cor:bern_con} can be extended to this case, together with well established degree elevation algorithms, in order to obtain better approximation results. 

The Lagrange basis method from Section \ref{ssec:lag}, which was based on sampling, can also be extended to triangular surfaces. However, when choosing samples, it is essential that the Lagrange polynomials defining the $\bm\alpha$-basis are linearly independent; that is, that they do in fact form a basis. For curves, choosing unique parameters for the sample points was sufficient (see Proposition \ref{prop:samps}). However, for surfaces we must add the requirement that the samples must not all lie on a curve of degree $mn$ in the parameter domain, where the number of samples is given by $L=\binom{mn+2}{2}.$

Orthogonal polynomials on triangular domains also exist and an extension of Theorem \ref{thm:DTD} holds in this case. However, fast algorithms for generating the coefficeints appear to be difficult to replicate, thus limiting the applicability of this method in practice. In particular, there appears to be no direct analogue of the FFT method for generating Chebyshev coefficients. We refer the reader to \cite{farouki_2003} for a discussion of orthogonal polynomials on triangular domains.

\subsection{An example of surface implicitization}\label{ssec:teapot}

As an example of approximate implicitization of tensor-product surfaces we will consider the problem of approximating the well known Newells' teapot model. It is stated in \cite{sederberg_1995} that ``the 32 bicucic patches defining Newells' teapot provide a surprisingly diverse set of tests for moving surface implicitization''. In that paper, properties of the moving surface implicitization algorithm for the different patches are discussed and exact implicit degrees for each patch are stated. We will consider the same 32 patches here, but instead use approximate methods, where the degree of approximation has been chosen manually to give good visual results.

Each of the 32 bicubic parametric surfaces has been approximated using the tensor-product Chebyshev method and the degrees stated in Table \ref{tab:teapotdeg}. The resulting surface has been ray traced in Figure \ref{fig:teapot} using POV-Ray \cite{povray}, and clearly gives an excellent implicit approximation to the parametric teapot model. Moreover, the degrees of the surface patches are greatly reduced from the exact degrees, which are also stated in Table \ref{tab:teapotdeg}. The highest degree patch in the approximated model is six, whereas if exact methods are used patches of degree up to 18 are required. Generally, it appears that for visual purposes, the degree can be reduced to roughly one third of the exact degree. 

This example shows one potential application of approximate implicitization, however, there are several factors that should be noted. Firstly, a significant amount of user input was required to generate the approximations of the teapot patches. This involved choosing degrees that were suitable for each patch, and also choosing approximations without extra branches in the region of interest. This was done by considering approximations corresponding to other singular values than the smallest. For example, for the upper handle patches we chose the approximation corresponding to the fourth smallest singular value. For each increased singular value, the convergence rate of the method is reduced by one \cite{dokken_2006}. However, even with the reduced convergence rates, the Chebyshev method continues to give excellent approximations. User input was also given to define the clipping region for the ray tracing of each patch. In this example, the clipping regions were boxes parallel to the $xy,xz$ and $yz$-planes. however, in more complicated examples it is not always suitable to use box regions for this purpose. 

Another feature of this example is that the continuity between the parametric patches has been approximated very well in the implicit model. This is mainly due to the high convergence rates, which give good approximations over the entire surface region. However, in this case, there is also symmetry in the model meaning the edge curves where the patches meet can be approximated in a symmetric way. To achieve this we have used the monomial basis for the implicit representation since it is symmetric around the $z$-axis. For more general models, such symmetry would not be possible.

\begin{figure}
\begin{center}
\includegraphics[scale=0.1]{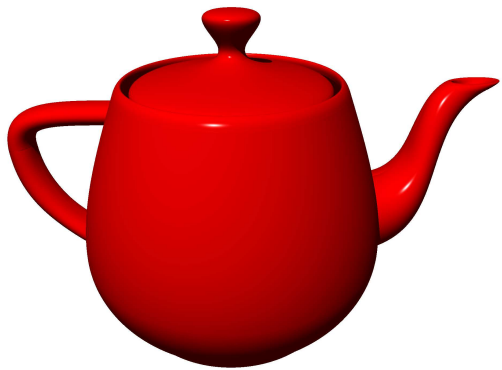}\caption{Teapot defined by 32 approximately implicitized patches from Section \ref{ssec:teapot}, with degrees given in Table \ref{tab:teapotdeg} and ray traced using POV-Ray \cite{povray}.}\label{fig:teapot}
\end{center}
\end{figure}

\begin{table}
\begin{tabular}{ |c|c|c| }
\hline
& Exact degree & Approximate degree \\ \hline
4 $\times$ rim          & 9 & 4 \\ \hline
4 $\times$ upper body   & 9 & 3 \\ \hline
4 $\times$ lower body   & 9 & 3 \\ \hline
2 $\times$ upper handle & 18 & 4 \\ \hline
2 $\times$ lower handle & 18 & 4 \\ \hline
2 $\times$ upper spout  & 18 & 5 \\ \hline
2 $\times$ lower spout  & 18 & 6 \\ \hline
4 $\times$ upper lid    & 13 & 3 \\ \hline
4 $\times$ lower lid    & 9 & 4 \\ \hline
4 $\times$ bottom       & 15 & 3 \\ \hline
\end{tabular}\caption{Exact implicit degrees of the 32 Newells' teapot patches and the degrees used for approximate implicitization in Section \ref{ssec:teapot}.}\label{tab:teapotdeg}
\end{table}

\section{Conclusions}

We have presented and unified several new and existing methods for approximate implicitization of rational curves using linear algebra. Theoretical connections between the different methods have been made together with qualitative comparisons. Extensions of the methods to both tensor-product and triangular surfaces have been discussed. By considering various issues such as approximation quality and computational complexity, we regard the Chebyshev and Legendre methods as the algorithms of choice for approximation of most rational parametric curves. However, to obtain good numerical stability when using floating point arithmetic for exact implicitization, the Bernstein basis is a more favourable choice. Future research could include how the methods can be improved, for example, by exploiting sparsity as in \cite{emiris_2005}, or by adding continuity constraints to the approximations \cite{bajaj_1992}.

\section{Acknowledgements}

The research leading to these results has received funding from the [European Community's] Seventh Framework Programme [FP7/2007-2013] under grant agreement n$^\circ$ [PITN-GA-2008-214584], and from the Research Council of Norway through the IS-TOPP program.

\bibliographystyle{plain}
\nocite{*}
\bibliography{paper}

\end{document}